\newcommand{\ds}{\displaystyle}
\newcommand{\defeq}{\mathrel{\mathop:}=}
\newcommand{\B}{\mathbb{B}}
\newcommand{\E}{\mathbb{E}}
\renewcommand{\L}{\mathbb{L}}
\newcommand{\M}{\mathbb{M}}
\newcommand{\N}{\mathbb{N}}
\renewcommand{\P}{\mathbb{P}}
\newcommand{\R}{\mathbb{R}}
\newcommand{\Z}{\mathbb{Z}}
\newcommand{\indic}{{\bf 1}}
\newcommand{\ind}[1]{{\indic_{\{#1\}}}}
\newcommand{\Ut}{{\widetilde U}}
\newcommand{\Vt}{{\widetilde V}}
\newcommand{\ut}{{\widetilde u}}
\newcommand{\Ar}{\mathcal A}
\newcommand{\dm}{\begin{pmatrix}} 
\newcommand{\fm}{\end{pmatrix}}
\newcommand{\ddm}{\begin{vmatrix}} 
\newcommand{\fdm}{\end{vmatrix}}
\newcommand{\s}{\,|\,} 
\newcommand{\st}{\,:\,} 
\DeclarePairedDelimiter\abs{\lvert}{\rvert} 
\DeclarePairedDelimiter\nor{\|}{\|} 
\DeclarePairedDelimiter\pa{(}{)} 
\DeclarePairedDelimiter\cro{[}{]} 
\newtheorem{theorem}{Theorem}
\newtheorem{lemma}{Lemma}
\newtheorem{proposition}{Proposition}
\newtheorem{corollary}{Corollary}
\newtheorem*{remark*}{Remark}
\newtheorem{definition}{Definition}
\author[A. Collevecchio]{Andrea Collevecchio}
\address{School of Mathematical Sciences, Monash  University, Melbourne} \email{Andrea.Collevecchio@monash.edu}
\author[K. Hamza]{Kais Hamza}
\address{School of Mathematical Sciences, Monash  University, Melbourne} \email{Kais.Hamza@monash.edu}
\author[L. Tournier]{Laurent Tournier}
\address{LAGA, Universit\'e Paris 13, Sorbonne Paris Cit\'e, CNRS, UMR 7539, 93430 Villetaneuse, France.
 }
\email{tournier@math.univ-paris13.fr}
\title[]{A deterministic walk on the randomly oriented\\ Manhattan lattice}
\DeclareMathOperator*{\thepath}{Path}
\DeclareMathOperator*{\thediam}{diam}
\begin{document}

\maketitle

{\footnotesize \noindent{\slshape{\bfseries Abstract.} Consider a randomly-oriented two dimensional Manhattan lattice where each horizontal line and each vertical line is assigned, once and for all, a random direction by flipping independent and identically distributed coins. A deterministic walk is then started at the origin and at each step moves diagonally to the nearest vertex in the direction of the horizontal and vertical lines of the present location.  This definition can be generalized, in a natural way, to larger dimensions, but  we  mainly focus on the two dimensional case. In this context the process localizes on two vertices at all large times, almost surely.  We also provide estimates for the tail of the length of paths, when the walk is defined on the two dimensional lattice. In particular, the probability of the path to be larger than $n$ decays sub-exponentially in $n$. It is easy to show that higher dimensional paths may not localize on two vertices but will still eventually become periodic, and are therefore bounded.}
}
\bigskip

\section{Introduction}
   \label{s:intro}

Random walks receive significant attention in the probability literature. The source of randomness can be associated to the walker, the environment in which she evolves or both.
When the environment is random, most of the effort has concentrated on the independent and identically distributed (i.i.d.) case. In this paper, we study a deterministic walk in a random environment with infinite length correlation. The setting is that of the randomly oriented Manhattan lattice in $\mathbb{Z}^d$, in which lines that are parallel to the axes are uniformly and independently oriented. Ledger et al.~\cite{Led} study the simple random walk on this oriented lattice, i.e.\ a walker that chooses her next step uniformly at random amongst the $d$ coordinate directions and with the orientation prescribed by the environment. They prove bounds that point to the super-diffusivity of this process in dimensions 2 and 3 but the question of transience remains an open problem. See \cite{Cam1,Nad1,Nad2} for more precise results on a partially oriented lattice, and \cite{Cam1, Ler1} for the role and importance of such models in quantum statistical physics.

Our goal is to study a deterministic process that moves diagonally on the randomly oriented Manhattan lattice by following the local drift of the previous walk, i.e.\ in a direction that combines all $d$ orientations at the present vertex: From a vertex $x \in \Z^d$, the walk travels to the vertex $y$ where, for $i=1,\ldots,d$,  $y_i-x_i=\pm1$ depending on the orientation of the line parallel to the $i$th axis and going through $x$.

In our analysis we focus mostly on the two dimensional case, which is formally similar to the so-called corner percolation of Pete \cite{pete2008}. In the latter the walk alternates between horizontal and vertical steps, following the random orientation of the corresponding line. It is shown (\cite{pete2008}) that the path forms a finite cycle, almost surely, and that the distribution of its diameter has a polynomial tail, with an explicit exponent. Let us remark that such a path can also be viewed as a beam of light among mirrors placed on vertices of $\Z^2$ and rotated according to the random line orientations; such random mirror models (see e.g.\ \cite{Kozma} for the usual i.i.d.\ setting), as discrete analogs of Lorentz gas, are another important instance of deterministic motion in random environment.

In contrast to corner percolation, we prove that in dimension 2 the above defined process eventually localizes on exactly two vertices. We also study the tail of the length of the path starting from the origin and show that it has as a stretched exponential decay. While the fact that paths are eventually periodic is rather natural, and expected for wide classes of deterministic walks, the property that the only cycles are trivial ones (i.e.\ of length 2) seems to single this model out. Our approach to prove this property is combinatorial in nature, and does not rely on the i.i.d.\ structure of the orientation of lines.

Let us finally note that deterministic walks in random environment can also be viewed, by ``integrating the environment out'', i.e.\ under the annealed measure, as random walks with a long memory, with the further property that they are automatically self avoiding (before possibly cycling). A remarkable example in that respect is the exploration path of the frontier of a percolation cluster, subject to some boundary conditions (see for instance~\cite{CamiaNewman} for a celebrated result of convergence to an ${\rm SLE}_6$ process).


\subsection{Model and main results}\label{mod}
Let $U=(U_y)_{y\in\Z}$ and $V=(V_x)_{x\in\Z}$ be sequences of random variables in $\{-1,1\}$ defined on a probability space $(\Omega,\Ar,\P)$. $U$ and $V$ can be thought of as orientations of respectively horizontal and vertical lines in $\Z^2$.

We assume all variables $U_x$, $V_y$, $x,y\in\Z^2$, to be independent and have symmetric distribution.

Given $(U,V)$, we are interested in the discrete path $Z=(Z_n)_{n\ge0}=((X_n,Y_n))_{n\ge0}$ in $\Z^2$ that starts at $Z_0=(0,0)$ and follows diagonals given by the orientations $(U,V)$:
\[\text{for all $n\ge0$,}\qquad Z_{n+1}=Z_n+(U_{Y_n},V_{X_n}).\]
Note that all the randomness of $Z$ is contained in the data of $(U,V)$.

\begin{figure}[h!]
	\includegraphics[width=7cm]{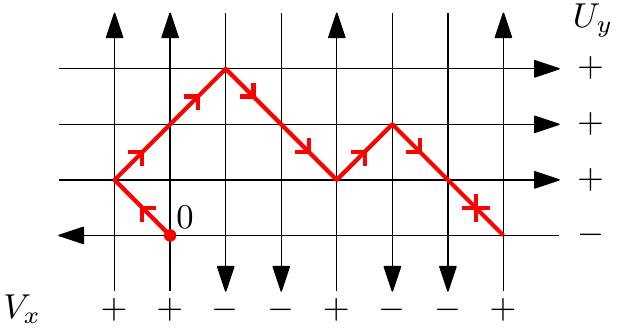}
\caption{Example of orientation of lines and the associated path $Z$, eventually trapped}
\end{figure}

Our main result is the following.

\begin{theorem}\label{thm:Z}
	Almost surely, the path $Z$ gets eventually stuck on one edge, i.e.\ it is eventually 2-periodic.
\end{theorem}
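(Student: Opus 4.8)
The plan is to set up a dichotomy for the path based on the orientations it encounters, and to show that any path which does not get stuck must "escape to infinity" in a way that is incompatible with the geometry of a planar diagonal walk. The key observation is that the move $Z_{n+1}=Z_n+(U_{Y_n},V_{X_n})$ decouples into a horizontal increment $U_{Y_n}$ depending only on the current ordinate, and a vertical increment $V_{X_n}$ depending only on the current abscissa. So I would first analyze the projections $X=(X_n)$ and $Y=(Y_n)$ separately. The path is 2-periodic from time $n$ exactly when both $(U_{Y_n},V_{X_n})$ and $(U_{Y_{n+1}},V_{X_{n+1}})$ are opposite; equivalently, at the edge $\{Z_n,Z_{n+1}\}$ the two horizontal lines through $Y_n$ and $Y_{n+1}$ point towards each other and the two vertical lines through $X_n$ and $X_{n+1}$ point towards each other. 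Thus a \emph{trap} is a $2\times2$ block of lines whose four orientations form an inward-pointing configuration; the theorem says the walk always reaches such a block.

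\smallskip

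The heart of the argument should be combinatorial and deterministic (as the authors announce): I would argue that \emph{every} orientation environment $(U,V)$ forces the path from the origin either to cycle or to be trapped, and then rule out large cycles. For the first part, suppose the path is not eventually 2-periodic. Consider the sequence of ordinates $(Y_n)$. Because $Y_{n+1}-Y_n=V_{X_n}\in\{-1,1\}$, the walk $Y$ is a nearest-neighbour path on $\Z$; I would track the running maximum and minimum of $Y$. A new maximal ordinate $m$ can only be produced by an upward step, i.e.\ $V_{X_n}=+1$ at the vertex where $Y_n=m-1$; similarly for new minima. The crucial monotonicity is that once the vertical line $X_n$ has been "used" to push $Y$ upward past level $\ell$, the orientation $V_{X_n}=+1$ is fixed, so revisiting the same abscissa at the same ordinate produces the same vertical move — which is what makes the walk's future a function of a finite amount of data and forces eventual periodicity on any bounded region. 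So the only way to avoid cycling is for at least one of $X$ or $Y$ to be unbounded. I would then show this is impossible: if, say, $\limsup Y_n=+\infty$, then infinitely many distinct vertical lines are crossed with orientation $+1$, and by the diagonal coupling the abscissa $X$ must also drift; tracking both extremal processes and using that each new extremal line permanently commits one coordinate of orientation, one reaches a configuration where the four surrounding lines of some visited block are inward — a trap — contradicting non-trapping. This "a monotone excursion must terminate at a trap" step is where the planar $d=2$ structure is essential and is, I expect, the main obstacle: making precise how the commitments of $U$ along the top/bottom of an excursion and of $V$ along its sides interlock to create an inward $2\times2$ block.

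\smallskip

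Once it is established that the path is eventually periodic, it remains to exclude cycles of length $>2$. A periodic path traces a closed diagonal curve in $\Z^2$; projecting onto the two axes gives two closed nearest-neighbour loops in $\Z$, on the set of abscissae $\{X_n\}$ and ordinates $\{Y_n\}$ visited along the cycle. On the cycle, each horizontal line at height $y$ must be traversed the same number of times in each direction (it is a loop), but its orientation $U_y$ is a single fixed value in $\{-1,1\}$ — so every visit to ordinate $y$ uses the \emph{same} horizontal increment. Hence along the whole cycle the ordinate strictly increases whenever $U_{Y_n}=+1$ and strictly decreases whenever $U_{Y_n}=-1$; for $Y$ to return to its start, I would argue the cycle must be confined to at most two consecutive values of the ordinate, and symmetrically at most two consecutive abscissae, which pins the cycle inside a $2\times 2$ block. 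A short case-check of the sixteen orientation patterns on a $2\times2$ block shows the only closed path is the 2-cycle across the inward edge. Combining: the path is eventually periodic and every cycle has length $2$, i.e.\ $Z$ eventually gets stuck on one edge.
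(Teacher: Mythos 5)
There are genuine gaps in both halves of your plan. First, your claim that \emph{every} environment $(U,V)$ forces the path either to cycle or to be trapped is false as a deterministic statement: if all $U_y=+1$ and all $V_x=+1$, the path marches to infinity along a diagonal, never cycling and never trapped. Boundedness of the path is only an almost-sure statement, and the step you yourself flag as ``the main obstacle'' --- that an unbounded excursion must run into an inward $2\times2$ block --- cannot be closed by the commitment/interlocking heuristic you sketch; it needs the randomness. The paper's argument is a renewal-type one: each time the walk enters a new vertical line while the current horizontal line points backwards, the orientation of that freshly discovered vertical line is independent of the past and creates a trap with probability $1/2$; Borel--Cantelli then stops the sequence of such discoveries, and the remaining scenario (the walk riding a stripe of east-pointing horizontal lines forever, its ordinate performing a simple symmetric random walk) is killed because that random walk almost surely reaches a line of the opposite orientation. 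Without some argument of this probabilistic kind your first half does not go through. (The part of your reasoning that is fine is the easy implication: once the path is bounded, determinism of the step as a function of the current vertex forces it into a cycle.)

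Second, your exclusion of cycles of length $>2$ is a non sequitur. You mix up the coordinates (it is the \emph{abscissa}, not the ordinate, that changes by $U_{Y_n}$), and more importantly the observation that every visit to ordinate $y$ uses the same horizontal increment $U_y$ only yields the balance conditions $\sum_n U_{Y_n}=0$ and $\sum_n V_{X_n}=0$ along a closed path; it does not confine a hypothetical cycle to two consecutive ordinates and two consecutive abscissae. Also, the walk never ``traverses horizontal lines,'' so the loop-traversal count you invoke has no meaning in this model. Ruling out nontrivial cycles is precisely the delicate combinatorial core of the paper: one first passes to a reduced environment with no alternating patterns $(+,-,+)$, then shows by an induction on mid-edge cycles in the block lattice that any such cycle must enclose exactly one more source cell than trap cells, while a conservation (path-counting) argument shows a genuine cycle of the walk would have to enclose equally many sources and traps --- a contradiction. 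Your paragraph replaces this entire mechanism with an unsupported confinement claim, so the second half is also incomplete. The overall architecture (bounded $\Rightarrow$ eventually periodic, plus ``only cycles are traps'' $\Rightarrow$ $2$-periodic) does match the paper, but neither of the two key inputs is actually established in your proposal.
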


Since $Z$ is a deterministic function of $(U,V)$, it is actually useful to see this result as a more general property of the graph $\omega$ made of the union of such paths started at all initial locations in $\Z^2$.

Define $\L=(\Z^2,\E)$ as the directed graph whose vertex set is $\Z^2$ and whose edge set $\E$ consists of all diagonal edges across faces, i.e.\ $((x,y),(x',y'))$ with $\abs{x-x'}=\abs{y-y'}=1$.
Given $(U,V)$, define $\omega$ to be the following subset of $\E$:
\[\omega=\Big\{\big((x,y),(x+U_y,y+V_x)\big)\st (x,y)\in\Z^2\Big\}\subset\E.\]
Notice that $\omega$ contains exactly one (directed) outgoing edge at each vertex, and that $Z$ is the only path in $\omega$ starting at $(0,0)$.

The above theorem expresses that almost surely, every path in $\omega$ ends with a \emph{trap}, i.e.\ a pair of opposite edges ($U_y+U_{y+V_x}=0$ and $V_x+V_{x+U_y}=0$). The content is twofold: paths in $\omega$ don't leave to infinity, and the only cycles in $\omega$ are made of two edges.
We can be a little more precise about $\omega$ by also excluding paths coming from infinity:

\begin{theorem}\label{thm:omega}
Almost surely, every connected component of $\omega$ is bounded and contains one  trap.
\end{theorem}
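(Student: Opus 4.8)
The plan is to derive Theorem~\ref{thm:omega} from Theorem~\ref{thm:Z} by a soft argument using only that the law of $(U,V)$ is invariant under translations of $\Z^2$ (which holds, since the $U_y$ and $V_x$ are i.i.d.\ and symmetric), together with a mass-transport computation to rule out paths coming from infinity. First I would upgrade Theorem~\ref{thm:Z} to every starting point: since the $\omega$-graph of a translated environment is the corresponding translate of $\omega$, the unique forward path of $\omega$ started at any $v\in\Z^2$ has, up to translation by $v$, the same law as $Z$; so by Theorem~\ref{thm:Z} and a countable union over $v\in\Z^2$, almost surely the forward path of $\omega$ from \emph{every} vertex is eventually $2$-periodic, i.e.\ terminates at a trap. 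From now on one argues on this full-measure event. For $v\in\Z^2$ let $\phi(v)$ be the first of the two vertices of that terminal trap to be visited by the forward path from $v$; this is a measurable, translation-covariant function of $(U,V)$, and $\phi(v)$ is always a trap-vertex.

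Next I would read off the structure of a component. Each vertex has exactly one outgoing $\omega$-edge, so if $v\to v'$ in $\omega$ then the forward paths from $v$ and $v'$ terminate at the same trap; hence the map sending $v$ to the trap where its forward path ends is constant along (undirected) edges of $\omega$, so it is constant on each connected component $K$, with common value a single trap $\tau_K=\{t_K,t_K'\}\subseteq K$ (it lies in $K$, being visited by a forward path inside $K$). Any trap $\{s,s'\}\subseteq K$ is the terminal trap of the forward path from $s$ (which is $2$-periodic from the start), hence equals $\tau_K$, so $K$ contains exactly one trap. Moreover every $v\in K$ has $\phi(v)\in\{t_K,t_K'\}$, and conversely $\phi^{-1}(\{t_K,t_K'\})\subseteq K$, so $K=\phi^{-1}(t_K)\sqcup\phi^{-1}(t_K')$.

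The crux is to show finiteness of $\phi^{-1}(x_0)$. I would set $m(x,y)=\ind{\phi(x)=y}$ for $x,y\in\Z^2$ (on the null complement of the good event, set $m\equiv 0$); this is a nonnegative, translation-covariant mass assignment, so the mass-transport principle on $\Z^2$ gives $\E[\sum_y m(0,y)]=\E[\sum_x m(x,0)]$. The left side equals $1$, since $\phi(0)$ is a single vertex; hence $\E[|\phi^{-1}(0)|]=1$, and by translation invariance $\E[|\phi^{-1}(x_0)|]=1$ for every $x_0\in\Z^2$, so a countable union shows that almost surely $|\phi^{-1}(x_0)|<\infty$ for all $x_0\in\Z^2$. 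Combined with the previous paragraph, almost surely every connected component $K=\phi^{-1}(t_K)\sqcup\phi^{-1}(t_K')$ is finite, hence bounded, and contains exactly one trap, which is Theorem~\ref{thm:omega}.

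The only genuine difficulty is the mass-transport step, and within it the point that needs care is checking that $\phi$ (equivalently $m$) is genuinely covariant under lattice translations, so that the transport principle is applicable; the rest is routine bookkeeping once Theorem~\ref{thm:Z} is available. If one wished to avoid invoking that principle, the alternative would be to estimate $\P(|\phi^{-1}(0)|\ge n)$ directly from the i.i.d.\ structure, which would require identifying the local environment configurations that funnel many vertices into a single trap; the mass-transport route is considerably shorter and is the one I would take.
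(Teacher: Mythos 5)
Your proposal is correct and follows essentially the same route as the paper: the paper also combines Theorem~\ref{thm:Z} (so every forward path ends at a trap) with a mass-transport argument using virtually the same transport function ($f(u,v)=\ind{v=\tau(u)\ne\infty}$, mass sent from each vertex to its terminal trap), concluding that the basin of each trap is almost surely finite and hence every component is finite with exactly one trap. The only cosmetic difference is that the paper runs the mass-transport computation without first conditioning on the full-measure event from Theorem~\ref{thm:Z}, obtaining $\P(o\not\to\infty)=\E[\ind{o\in\IV}\abs{C(o)}]$ instead of your identity $\E[\abs{\phi^{-1}(0)}]=1$.
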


In other words, the connected components of $\omega$ are finite trees rooted at a trap. Note that we will commonly view subsets of $\E$ as graphs, meaning implicitly that their vertex set is the set of endpoints of the edges they contain, which will usually be the whole set $\Z^2$.


\subsection{Extension of the model to $\Z^3$}
The model admits a natural extension to $\Z^3$, where every east-west, north-south and up-down line is randomly oriented. Thus one needs random variables $(U_{(y,z)})_{(y,z)\in\Z^2}$, $(V_{(x,z)})_{(x,z)\in\Z^2}$ and $(W_{(x,y)})_{(x,y)\in\Z^2}$ to define $Z=(Z_n)_{n\ge0}$ by $Z_0=(0,0,0)$ and, for $n\ge0$,
\[Z_{n+1}=Z_n+(U_{(Y_n,Z_n)},V_{(Z_n,X_n)},W_{(X_n,Y_n)}).\]
We still assume all orientations to be independent and symmetric. The graph $\omega$ is defined similarly as before.

\begin{theorem}
Almost surely, the path $Z$ gets eventually periodic.

Almost surely, every component of $\omega$ is bounded.
\end{theorem}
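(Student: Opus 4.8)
The first claim (eventual periodicity of $Z$) is actually an immediate consequence of the second, so the real content is the boundedness of every connected component of $\omega$. I will deduce this from the two-dimensional result (Theorems~\ref{thm:Z}--\ref{thm:omega}) by \emph{projecting}. The key observation is that each of the three coordinate increments of $Z$ depends only on two of the three coordinates of the current position: the $X$-increment is $U_{(Y_n,Z_n)}$, so the pair $(Y_n,Z_n)$ evolves \emph{exactly} like a two-dimensional path of the original model, driven by the orientations $U$ (horizontal, indexed by $Y$) and $W$ (vertical, indexed $Z$), \emph{except} that at each step it moves by $(W_{(X_n,Y_n)}, V_{(Z_n,X_n)})$ rather than by $(U_{(Y_n,Z_n)}, \cdot)$ --- wait, one must be careful: the increment of $Y$ is $V_{(Z_n,X_n)}$, which depends on $X_n$, not purely on $(Y_n,Z_n)$. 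So the naive projection is \emph{not} autonomous. The fix is to note that although the $(Y,Z)$-walk is not a Markov path in its own plane, its increments still take values in $\{-1,+1\}^2$, and more importantly the \emph{set of lines visited} in each coordinate is what we need to control.

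\textbf{The argument I would actually run.} Consider the three ``shadow'' sequences $(Y_n,Z_n)_n$, $(Z_n,X_n)_n$, $(X_n,Y_n)_n$ in $\Z^2$. Fix attention on $(X_n,Y_n)_n$: its increments are $(U_{(Y_n,Z_n)}, V_{(Z_n,X_n)})$. The first component $U_{(Y_n,Z_n)}$ is $+1$ or $-1$, and crucially, \emph{as long as $Z_n$ stays constant equal to some value $z$}, the increment of $X$ is $U_{(Y_n,z)}$ --- a function of $Y_n$ alone --- and the increment of $Y$ is $V_{(z,X_n)}$ --- a function of $X_n$ alone. So, on any maximal time-interval where the third coordinate is frozen, the $(X,Y)$-shadow is \emph{exactly} a piece of a two-dimensional Manhattan path with orientations $U_{(\cdot,z)}$ and $V_{(z,\cdot)}$. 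By Theorem~\ref{thm:Z} applied to this two-dimensional environment, such a path, \emph{if the third coordinate never unfroze}, would be eventually $2$-periodic; but a $2$-periodic $(X,Y)$-shadow forces the third increment $W_{(X_n,Y_n)}$ to alternate between two values over the trap edge, which either keeps $Z$ frozen (if $W$ takes the same value on both vertices of the edge, impossible on a trap by the trap condition... actually $W$ on the two endpoints can be anything) --- here I need to think again. Let me restructure: the clean statement is that while $Z$ is constant, the $(X,Y)$-shadow moves as a 2D Manhattan path, hence within a \emph{bounded} region (the finite component of the corresponding 2D graph $\omega^{(z)}$), \emph{before} it would loop; it loops only if it reaches the trap of that 2D component; and reaching the trap means $(X,Y)$ starts oscillating on a fixed edge $\{a,b\}$, at which point $Z$ changes by $W_a$, then $W_b$, then $W_a,\dots$. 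If $W_a=W_b$ then $Z$ drifts off monotonically, which contradicts... no. This is the crux and I flag it below.

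\textbf{Key steps, in order.} (1) Formalize the ``frozen-coordinate'' reduction: on any time interval where $Z_n\equiv z$, the pair $(X_n,Y_n)$ follows the 2D dynamics with environment $(U_{(\cdot,z)},V_{(z,\cdot)})$. (2) Invoke Theorem~\ref{thm:omega} in 2D: the component of $\omega^{(z)}$ containing the entry point is a finite tree rooted at a trap; hence during the frozen interval, $(X,Y)$ visits only finitely many sites and, if the interval were infinite, would end in a $2$-cycle on some edge $\{a,b\}$. (3) Analyze what a change of $Z$ does: when $(X,Y)$ sits on the trap edge and oscillates, $Z$ increments alternate $W_a, W_b$; either these cancel (then $Z$ returns to $z$ after two steps and $(X,Y)$ resumes a frozen-$z$ path from a known vertex), or they reinforce (then $Z$ moves monotonically). (4) Rule out infinitely many reinforcing excursions / show the walk is confined: the pigeonhole-type observation is that each $z$-level can be ``visited'' only through finitely many sites and that the whole path, being a deterministic orbit in the locally finite graph $\omega$, either is bounded or is infinite and self-avoiding until it cycles --- and an infinite self-avoiding orbit would have to escape in some coordinate, say $X\to+\infty$, forcing infinitely many 2D-shadow traps with a consistent drift, which one shows is impossible by a renewal/independence argument over the fresh randomness encountered. (5) Conclude: every orbit is eventually periodic, and since distinct components of $\omega$ are unions of such orbits (reversed), boundedness of components follows as in the 2D case.

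\textbf{Main obstacle.} The genuine difficulty is Step~(4): unlike the 2D setting, a frozen-coordinate excursion can \emph{leave} its plane, so the clean ``finite tree rooted at a trap'' picture only controls the motion \emph{between} level-changes, not globally. I expect the right tool is a renewal argument: each time the third coordinate takes a value $z$ not previously visited, the orientations $U_{(\cdot,z)}$ and $V_{(z,\cdot)}$ (restricted to the relevant, still-unexplored lines) are fresh i.i.d.\ symmetric variables, so the 2D result gives an independent, a.s.-finite excursion that returns $Z$ to a \emph{bounded-in-probability} neighbourhood; summing geometric-type bounds shows $Z$ is a.s.\ bounded, and then the whole orbit lives in a finite box, hence is eventually periodic. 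Making ``fresh randomness'' precise --- tracking exactly which lines have been exposed and checking the excursion on a new level really does use only unexposed orientations --- is the technical heart, but it is the direct analogue of the bookkeeping already needed in 2D, now stratified by the value of the escaping coordinate.
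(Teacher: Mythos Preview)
Your proposal has a genuine gap at its very foundation. In the three-dimensional model, every coordinate changes by $\pm1$ at every step: the third increment is $W_{(X_n,Y_n)}\in\{-1,+1\}$, never $0$. Hence there is no nontrivial time interval on which the third coordinate is ``frozen'', and your Step~(1) --- the frozen-coordinate reduction to a two-dimensional Manhattan path --- is vacuous. Everything built on it (Steps~(2)--(3), the trap-edge analysis, the dichotomy on $W_a$ vs.\ $W_b$) therefore has no content. The same objection applies to your renewal sketch in Step~(4) and in the final paragraph: you speak of ``excursions on a new level $z$'' and of applying the 2D result there, but the walk leaves level $z$ after a single step, so there is no excursion to which Theorem~\ref{thm:omega} could be applied.

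The paper's proof is far simpler and does not pass through the two-dimensional theorem at all. It is the $d$-dimensional form of the renewal argument for Proposition~\ref{prop:bounded}: suppose the path is unbounded in, say, the first coordinate. Then there are infinitely many times $T_r$ at which the walk first enters a new hyperplane $\{x\}\times\Z^{d-1}$ with the first-axis orientation at that vertex pointing backward (this is forced, exactly as in Lemma~3). At each such time the remaining $d-1$ line-orientations at the current vertex are fresh, independent, symmetric $\pm1$'s, and with probability $1/2^{d-1}$ they all point back to the previous vertex, producing a trap. Independence and Borel--Cantelli then give a contradiction, so the forward orbit is bounded and hence eventually periodic. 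Boundedness of every component follows by the mass-transport argument of Proposition~\ref{prop:from_infinity} (send each vertex to a canonical point of its limit cycle); nothing in that step uses the two-dimensional ``only traps are cycles'' property. Your instinct that a renewal argument over fresh randomness is the right tool is correct; what is wrong is the attempt to run it through 2D shadows and frozen levels.
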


However, in contrast to the two-dimensional case, nontrivial cycles exist in higher dimension, as illustrated on Figure~\ref{fig:3D}.

\begin{figure}[h!]
	\includegraphics[width=7cm]{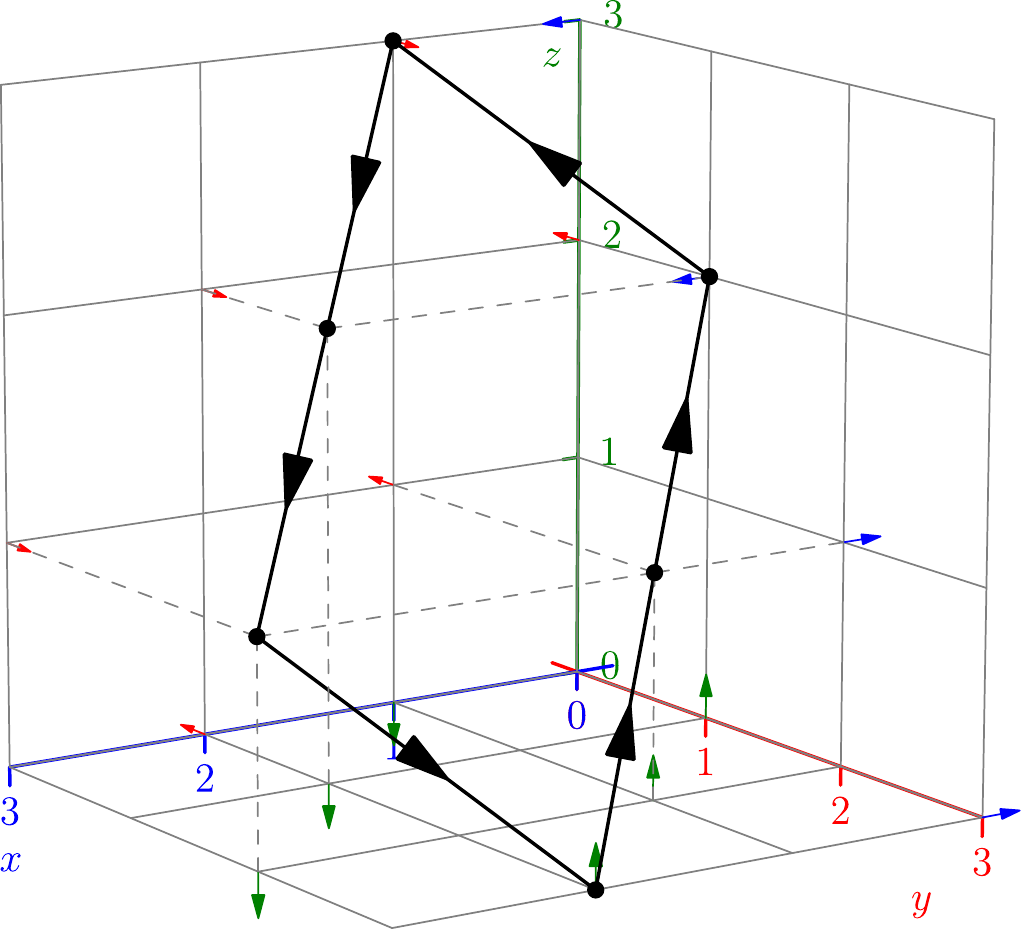}
\caption{Example of a nontrivial cycle in $\Z^3$ (in black). The colored arrows indicate the orientations of the lines relevant to the cycle.\label{fig:3D}}
\end{figure}

\subsection{Definitions: Traps, sources and crossings}

Let us come back to $\Z^2$. The proof relies on geometrical properties of the graph $\omega$ and it will be convenient to introduce a set of definitions regarding this graph, both to write the formal proof and to first describe its principle.

We already noticed that $\omega$ may contain traps, which are pairs of opposite edges that cross diagonally a face of the square lattice $\Z^2$. Let us classify more generally the different types of faces.

\begin{definition}
A \emph{cell} is a face of the square lattice $\Z^2$; it can be identified with its center $(x+\frac12,y+\frac12)$ for some $(x,y)\in\Z^2$.
We say that an edge in $\E$ \emph{crosses} a given cell if both its endpoints are corners of that cell.

A cell $(x+\frac12,y+\frac12)$ is a \emph{source} (resp.\ a \emph{crossing}, resp.\ a \emph{trap}) in $\omega$ if it is crossed by no edge (resp.\ by one edge, resp.\ by two edges) in $\omega$.
\end{definition}

This definition is illustrated on Fig.~\ref{fig:cells}. One should in particular notice that, from the definition of $\omega$, the two edges crossing a trap cell are always opposite edges, hence indeed corresponding to what we earlier called a trap, and that no more than two edges may cross a cell, hence each cell is either a source, a crossing or a trap.

\begin{figure}[h!]
	\includegraphics[width=7cm,page=4]{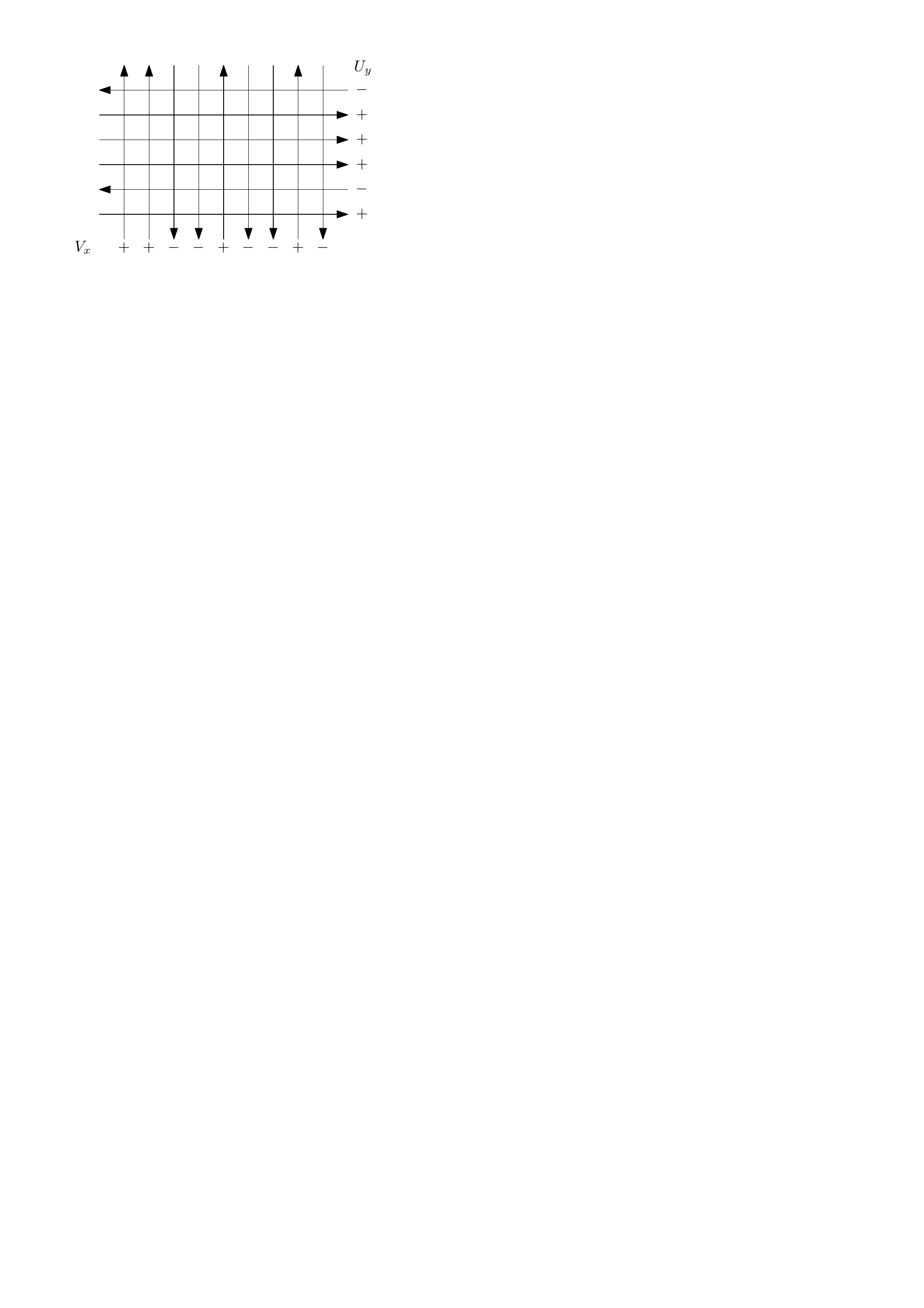}
\hspace{1cm}
	\includegraphics[width=7.8cm]{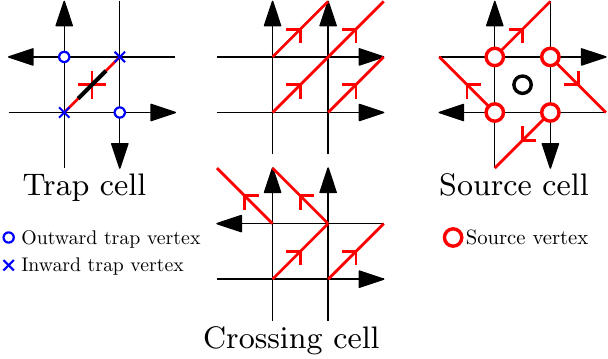}
\caption{Representation (left) of all paths (i.e., of $\omega$) in a box. As reminded on the right, black circles indicate source cells, black diagonal strokes indicate trap cells; Red circles indicate source vertices, blue circles indicate inward trap vertices and blue crosses indicate outward trap vertices. These three types of vertices can be cumulated. \label{fig:cells}}
\end{figure}

\begin{definition}
A vertex $v\in\Z^2$ is a \emph{source vertex} if it is a corner of a source cell. If $v$ is a corner of a trap cell, it is an \emph{inward trap vertex} if it is one of two endpoints of the crossing edges, and an \emph{outward trap vertex} otherwise.
\end{definition}

Note that not every vertex has one among these qualifiers, and that they are not mutually exclusive. Indeed, source cells and trap cells may lie next to each other. However, an important tool in our analysis is a \emph{reduced model} that is precisely made so as to avoid this situation (see Section~\ref{sec:reduction}).

\newcommand{\IV}{\mathfrak{trap}}

We will always view the edges in $\omega$ as embedded into $\R^2$ in the following way: every edge is represented by a straight line between its endpoints except for traps, which are embedded as two disjoint arcs in the same cell. Although $\E$ is not planar, using this embedding we have the following deterministic statement, which simply follows by inspection of each type of cell:

\begin{lemma}
$\omega$ is a planar graph.
\end{lemma}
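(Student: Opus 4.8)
The plan is to verify directly that the prescribed embedding into $\R^2$ has no edge crossings, exploiting that every edge of $\omega$ lives inside a single cell. First I would observe that each edge of $\E$, being a diagonal $((x,y),(x',y'))$ with $\abs{x-x'}=\abs{y-y'}=1$, is the diagonal of exactly one cell, namely the face with corners $(x,y),(x',y),(x,y'),(x',y')$; under the embedding it is drawn inside the closure of that cell (a straight segment if the edge is the only one crossing the cell, or one of two disjoint arcs if the cell is a trap). So to the embedding of $\omega$ I can associate, for each edge, the unique cell it crosses.

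Next I would split the possible intersections of two distinct embedded edges $e,e'$ of $\omega$ according to whether they cross the same cell or not. If they cross different cells $c\ne c'$, then since the open cells are pairwise disjoint, any common point of $e$ and $e'$ must lie on $\partial c\cap\partial c'$, which is contained in the $1$-skeleton of $\Z^2$ (a union of unit segments and lattice points); but an embedded diagonal meets the boundary of its own cell only at its two endpoints, which are corners of the cell, i.e.\ points of $\Z^2$. Hence $e$ and $e'$ can only meet at a common endpoint, a vertex of the graph, which is permitted in a planar embedding.

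If instead $e$ and $e'$ cross the same cell $c$, then $c$ is crossed by at least two edges of $\omega$, so by the trichotomy of cells (source, crossing or trap) it is a trap; a trap is crossed by exactly two, opposite, edges, and under the embedding these are represented by two disjoint arcs, so $e\cap e'=\varnothing$. Collecting the two cases, distinct edges of $\omega$ meet at most at common vertices, so the embedding is planar.

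I do not expect a genuine obstacle here: the lemma is essentially a bookkeeping statement, and the only points requiring a moment's care are confirming that the embedding is well defined (each edge has a well-defined host cell, and the two arcs of a trap can indeed be chosen disjoint inside their common cell, e.g.\ by bending each of the two diagonals slightly off the center towards opposite sides), and checking that two diagonals of edge-adjacent cells meet only at the shared corner rather than along the shared unit edge — which holds precisely because a cell diagonal touches the boundary of its cell only at its two endpoints.
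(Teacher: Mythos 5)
Your argument is correct and is essentially the paper's own (the paper simply says the lemma ``follows by inspection of each type of cell''): you make the same case analysis explicit, using that each edge is confined to its unique host cell, that edges in distinct cells meet only at shared corners, and that the two opposite edges of a trap are embedded as disjoint arcs. No gap; you have just written out the inspection the paper leaves to the reader.
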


Let us also remind the useful fact that $\E$ has two connected components, given by odd and even vertices in $\Z^2$ respectively, each of which is planar and isomorphic to the square lattice. 
\subsection{Organization of the paper}
The next section introduces a certain transformation of the graph that plays a technical role by preventing trap and source cells from being neighbours. This is instrumental in Section~\ref{sec:trivial_cycles} to prove the combinatorial property that the only cycles in $\omega$ are the traps. Section~\ref{sec:bounded} finally proves that the components of $\omega$ are bounded almost surely and obtains estimates on the distribution of their size.


\section{Reduced model}\label{sec:reduction}

In this section we argue that, in proving Theorem~\ref{thm:Z}, we may replace the i.i.d.\ distributions of $U$ and $V$ by a distribution that is still shift invariant and symmetric, but also such that the alternating patterns $(+1,-1,+1)$ and $(-1,+1,-1)$ don't appear anymore. The model where $U$ and $V$ follow this new distribution will be called the \emph{reduced model}, in the sense that it will intuitively be obtained by removing all such patterns from the initial $U$ and $V$ wherever they occur.

\subsection{Local reduction}

Let us first justify that if, in the sequence of orientations of horizontal lines, we replace one occurrence of $(-1,+1,-1)$ by $(-1)$ (i.e.\ we remove two lines and shift the next ones), then connectivities are preserved, hence in particular cycles and infinite paths remain (shorter) cycles and infinite paths. See Figure~\ref{fig:reduction} for an illustration.

\begin{figure}[h]
	\includegraphics[width=8cm]{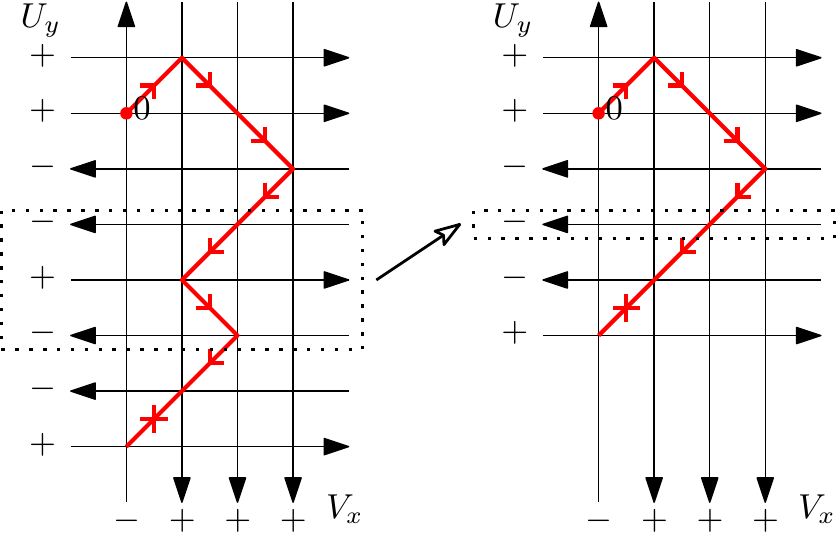}
	\caption{Reduction of a configuration: the occurrence of the alternating pattern $(-1,+1,-1)$ on horizontal lines is replaced by a single $-1$, which has a the only effect on paths of ``removing a zigzag pattern''. } \label{fig:reduction}
\end{figure}

Let us first consider occurrence of patterns $-1,+1,-1$ or $+1,-1,+1$ at zero: define
\[C_0=\{(u_k)_{k\in\Z}\st (u_{-1},u_0,u_1)\in\{(-1,+1,-1),(+1,-1,+1)\}\}.\]


Define the bijection $\sigma_0:\Z\to\Z\setminus\{0,1\}$ by $\sigma_0(k)=k$ if $k<0$ and $\sigma_0(k)=k+2$ if $k\ge0$, and its inverse $\pi_0:\Z\setminus\{0,1\}\to\Z$. Then we let, for $u\in\{-1,+1\}^\Z$,  $\ut=(u_{\sigma_0(n)})_{n\in\Z}$, which corresponds to deleting the values $u_0$ and $u_1$ and shifting the next ones.


By extension, if $a=(x,y)\in\Z^2$ and $x\notin\{0,1\}$, we define $\pi_0(a)=(\pi_0(x),y)$.

Finally, if $V\in C_0$ (where $V=(V_x)_{x\in\Z}$ gives the orientations of vertical lines), we define $\pi_0(\omega)$ in the same way as $\omega$, using $\Vt=(V_{\sigma_0(x)})_{x\in\Z}$ instead of $V$.

For $a,b\in\Z^2$, we shall write ``$a\to b$ in $\omega$'' if there is a path $a=a_0,a_1,\ldots,a_n=b$ in $\omega$ where $a_1,\ldots,a_{n-1}$ are not inward trap vertices, i.e.\ the path doesn't cross a trap.

\begin{lemma}
If $V\in C_0$, and $a,b\in\Z^2\setminus(\{0,1\}\times\Z)$ are such that $a\to b$ in $\omega$, then $\pi_0(a)\to\pi_0(b)$ in $\pi_0(\omega)$.
\end{lemma}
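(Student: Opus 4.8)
The plan is to transform the given path $a=a_0,a_1,\dots,a_n=b$ in $\omega$ into a path of $\pi_0(\omega)$ by applying $\pi_0$ to those vertices that lie outside the strip $S=\{0,1\}\times\Z$ and by ``ironing out'' the excursions the path makes inside $S$. Throughout I use that every vertex of $\omega$, and of $\pi_0(\omega)$, has exactly one outgoing edge, so that ``$u\to v$'' simply means that the forward orbit of $u$ reaches $v$ without meeting an inward trap vertex beforehand. Also, by the symmetry $(x,y)\mapsto(x,-y)$ — which replaces $(U,V)$ by $(U',V')$ with $U'_y=U_{-y}$ and $V'_x=-V_x$, hence interchanges the two alternating patterns allowed in $C_0$, commutes with $\pi_0$, and maps traps to traps and valid paths to valid paths — we may assume $(V_{-1},V_0,V_1)=(-1,+1,-1)$.

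I would first record the local dynamics of $\omega$ on the columns bordering $S$: from $(0,y)$ one moves to $(U_y,y+1)$, from $(1,y)$ to $(1+U_y,y-1)$, from $(-1,y)$ to $(-1+U_y,y-1)$, and from $(2,y)$ to $(2+U_y,y+V_2)$. Since $a,b\notin S$, every maximal sub-path of $a_0,\dots,a_n$ that lies inside $S$ has a neighbour in $\{-1,2\}\times\Z$ on each side, and a short forcing argument — used repeatedly, namely that a valid path can never close a trap at an interior vertex — shows such a sub-path must be one of exactly three ``gadgets'': (i) $(-1,y+1)\to(0,y)\to(1,y+1)\to(2,y)$, forcing $U_y=U_{y+1}=+1$; (ii) $(2,y)\to(1,y+V_2)\to(0,y+V_2-1)\to(-1,y+V_2)$, forcing $U_y=U_{y+V_2}=U_{y+V_2-1}=-1$; (iii) $(2,y)\to(1,y-1)\to(2,y-2)$, forcing $V_2=-1$, $U_y=-1$, $U_{y-1}=+1$. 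In particular, an excursion entered from column $-1$ always leaves through column $2$.

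Next, outside $S$ the map $\pi_0$ carries edges of $\omega$ to edges of $\pi_0(\omega)$: on $\{x\le-1\}$ it is the identity and $\Vt_x=V_x$, while on $\{x\ge2\}$ it is the shift $x\mapsto x-2$ and $\Vt_{x-2}=V_x$; moreover $\pi_0$ identifies column $-1$ with the reduced column $-1$ and column $2$ with the reduced column $0$. Using the values of $U$ pinned down above, each gadget is matched by a genuine short path of $\pi_0(\omega)$ joining the images of its endpoints: in case (i) the single edge $(-1,y+1)\to(0,y)$; in case (ii) the single edge $(0,y)\to(-1,y+V_2)$; in case (iii) the two-edge path $(0,y)\to(-1,y-1)\to(0,y-2)$; in each case this is precisely the edge(s) prescribed by $\Vt$. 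Concatenating the $\pi_0$-images of the maximal sub-paths lying outside $S$ with these gadget replacements yields a walk of $\pi_0(\omega)$ from $\pi_0(a)$ to $\pi_0(b)$ that follows the out-edges step by step, and hence is uniquely determined by $\pi_0(a)$.

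Finally I must check this walk meets no inward trap vertex before $\pi_0(b)$. Its interior vertices are of two kinds: images $\pi_0(a_i)$ of interior vertices $a_i\notin S$ of the original path, and the middle vertex $(-1,y-1)$ of a type-(iii) gadget. For the latter, the pinned values $U_{y-1}=+1$, $V_2=-1$ give that the $\pi_0(\omega)$-successor of $(-1,y-1)$ is $(0,y-2)$, whose own successor lies in row $y-3$; hence $(-1,y-1)$ is not the tail of a trap, so not an inward trap vertex. For the former, a short case check on the local dynamics (by whether $v_1\le-2$, $v_1=-1$, $v_1=2$, or $v_1\ge3$, and by the value of $V_2$) shows that if $\pi_0(v)$ is an inward trap vertex of $\pi_0(\omega)$ then $v$ is an inward trap vertex of $\omega$; applied to an interior $a_i$, this contradicts the validity of $a\to b$. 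The main obstacle is exactly this last check: the delicate point is that a trap of $\omega$ sitting in a cell $(-\tfrac12,\cdot)$ can fail to survive in $\pi_0(\omega)$ when $V_2=-1$, so one must notice that such a configuration would force an inward trap vertex in the interior of the original path and is therefore excluded by hypothesis.
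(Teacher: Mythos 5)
Your proof is correct. I checked the classification of strip excursions into the three gadgets (the forcing indeed uses exactly the hypothesis that interior vertices are not inward trap vertices, since any longer stay inside $\{0,1\}\times\Z$ closes a trap), the identification of their images as genuine one- or two-edge paths of $\pi_0(\omega)$, and the case check that if $\pi_0(v)$ is an inward trap vertex of $\pi_0(\omega)$ for $v\notin\{0,1\}\times\Z$ then $v$ is one of $\omega$ (the only non-equivalence being $v=(-1,y)$ with $U_y=+1$, where the reduced trap additionally requires $V_2=+1$). The route differs from the paper's mainly in organization: the paper argues by induction on the length of the path, peeling off the last one to three steps and running the same local case analysis ($x_{n-1}\notin\{0,1\}$; $x_{n-1}=1$ with $x_{n-2}=2$; $x_{n-1}=1$ with $x_{n-2}=0$; and symmetrically), whereas you perform one global surgery replacing every excursion through the strip by its gadget image. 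What your version buys is that it makes explicit two points the paper passes over quickly: the dichotomy on $V_2$ (the paper implicitly takes $y_{n-2}=y+1$, i.e.\ discards the entry that would force $a_{n-1}$ to be an inward trap vertex), and the verification that the concatenated path in $\pi_0(\omega)$ has no inward trap vertex in its interior, including the new middle vertex $(-1,y-1)$ of your gadget (iii) — a check the paper's proof does not spell out. One small remark: your closing sentence about a trap in a cell $(-\tfrac12,\cdot)$ ``failing to survive'' when $V_2=-1$ points in the harmless direction — a trap disappearing under reduction cannot hurt this lemma; the only direction needed is the one you actually verify, namely that no inward trap vertex is created at an interior vertex of the image path.
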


\begin{proof}
Assume for instance $(V_{-1},V_0,V_1)=(-1,+1,-1)$.
We work by induction on the length of the path from $a$ to $b$ in $\omega$. If $a$ and $b$ are at distance 1, then clearly $\pi_0(a)\to\pi_0(b)$ since the edge $(\pi_0(a),\pi_0(b))$ is in $\pi_0(\omega)$. Assume there is a path $a=(x_0,y_0),a_1=(x_1,y_1),\ldots,(x_n,y_n)=b$ in $\omega$. If $x_{n-1}\notin\{0,1\}$, then by induction $\pi_0(a)\to\pi_0(a_{n-1})$ in $\pi_0(\omega)$ hence a path to $\pi_0(b)$ using the final edge $(\pi_0(a_{n-1}),\pi_0(b))$. Assume $x_{n-1}=1$ (the other case is symmetric). Thus, $x_n=2$ since $x_n=x_{n-1}\pm 1$ and $x_n\notin\{0,1\}$ by assumption; this implies that $U_y=+1$ where $y=y_{n-1}$. If $x_{n-2}=2$, then it means $U_{y+1}=-1$; we can apply the induction to have $\pi_0(a)\to\pi_0(a_{n-2})=(0,y+1)$, and it remains to notice that we still have $(0,y+1)\to (-1,y)\to (0,y-1)=\pi_0(b)$ because $\widetilde{V}(-1)=V(-1)=-1=V(1)$. Assume finally that $x_{n-2}=0$. Since $V(0)=+1$, we must have $y_{n-2}=y-1$ and $U_{y-1}=+1$. This forces $a_{n-3}=(-1,y)$. By induction, $\pi_0(a)\to\pi_0(a_{n-3})$ in $\pi_0(\omega)$. We also have $\pi_0(a_{n-3})=(-1,y)\to(0,y-1)=\pi_0(b)$ hence $\pi_0(a)\to\pi_0(b)$. This concludes the proof.
\end{proof}


More generally, for any $x\in\Z$, we can consider the occurrence of alternating patterns at $x$ by defining $C_x=\theta_x C_0$, and $\pi_x:\Z\setminus\{x,x+1\}\to\Z$ is given by $\pi_x=\theta_x\pi_0\theta_{-x}$.


\subsection{Global reduction}

Next, we define a global reduction by applying the local reduction everywhere possible on $\Z$, so as to obtain sequences $\Ut$ and $\Vt$ that do not contain any alternating pattern.

Note first that the local reduction can be applied repeatedly to an alternating block of even size ``$(-)+-+-+-(+)$'' (where parentheses are used to distinguish certain signs), so as to obtain ``$(-)(+)$'', and to an alternating block of odd size ``$(-)+-+(-)$'' so as to obtain ``$(-)$''; in the latter case one has to choose which of the bounding signs is taken out, although the resulting sequence is the same either way. Let us thus define the set of alternance sites as follows: first, let
\[A=\{x\in\Z\st U_{x-1}\ne U_x\ne U_{x+1}\},\]
and then due to the previous remark define, for $x\in A$,
\[\ell_x=\inf\{y>x\st y\notin A\}-\sup\{y<x\st y\notin A\}-1,\]
the length of the block around $x$,
and the set of sites to be removed as
\[B=A\cup\{x+1\st x\in A,\ x+1\notin A,\ \ell_x\text{ is odd}\}.\]
Note that removing sites in $B$ produces a sequence without alternating pattern. Indeed, first, the blocks in $A$ are separated by definition by blocks of at least two consecutive identical signs; and removing the extra signs in $B\setminus A$ does not create blocks of length one since these odd-length blocks separate identical signs on their left and right.

Let us introduce the bijection $\sigma_B:\Z\to\Z\setminus B$ denoting by $\sigma_B(n)$ the $(n+1)$-th element of $\{0,1,2,\ldots\}$ not in $B$, when $n\ge0$, and the $(-n)$-th element of $\{-1,-2,\ldots\}$ not in $B$ when $n\le-1$, so that $\Ut=(U_{\sigma_B(x)})_{x\in\Z}$ is obtained from $U$ by removing indices in $B$ and shifting the others toward zero in order to ``fill the gaps''. Note that almost surely $\sigma_B(n)\to\pm\infty$ as $n\to\pm\infty$. Denote by $\pi_B:\Z\setminus B\to\Z$ its inverse function.

After removal, i.e.\ in the sequence $\Ut$, the lengths of blocks of consecutive identical signs are i.i.d.\ and integrable (they have a geometric distribution). In particular, up to a random shift this sequence can be made stationary.


%
%

Define as before $\pi_B(a)=(\pi_B(x),y)$ for $a=(x,y)\in\Z^2$ with $x\notin B$, and $\pi_B(\omega)$ by reindexing all edges whose ends have a first component not in $B$.

Recall the notation $a\to b$ in $\omega$ to denote the existence of a path in $\omega$ that connects $b$ to $a$ (i.e. $b$ is accessible from $a$).
By applying the local reduction several times, we immediately obtain:

\begin{lemma}
If $a,b\in\Z^2\setminus(\{0,1\}\times B)$ are such that $a\to b$ in $\omega$, then $\pi_B(a)\to\pi_B(b)$ in $\pi_B(\omega)$.
\end{lemma}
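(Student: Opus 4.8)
The statement to prove is the global reduction lemma:

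\begin{quote}
If $a,b\in\Z^2\setminus(\{0,1\}\times B)$ — more precisely, $a,b$ avoiding $\Z\times B$ for the relevant indices — are such that $a\to b$ in $\omega$, then $\pi_B(a)\to\pi_B(b)$ in $\pi_B(\omega)$.
\end{quote}

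\textbf{Plan of proof.} The idea is to deduce the global statement from the local one (the preceding lemma, applied at an arbitrary site $x$ via the shifted maps $C_x$, $\pi_x$) by a finite induction, using that the path from $a$ to $b$ is a finite object that visits only finitely many vertices, hence only finitely many columns. First I would record the key compatibility fact: the global map $\pi_B$ restricted to a single alternating block can be realized as a composition of local maps $\pi_{x_1}\circ\pi_{x_2}\circ\cdots$, one for each alternating pattern removed inside that block, where after each removal the indices of the remaining blocks are renumbered; and that distinct blocks, being separated by at least two equal signs, can be reduced independently and in any order, their local reductions acting on disjoint ranges of columns. This is exactly the content of the remark made just before the lemma (repeated local reduction of an even block ``$(-)+-+\cdots(+)$'' to ``$(-)(+)$'' and of an odd block to ``$(-)$''), so it may be invoked without re-proving it.

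\textbf{The induction.} Since $a\to b$ uses a finite path, it meets only finitely many columns, hence intersects only finitely many alternating blocks of $A$; enumerate the sites of $B$ that lie in the columns visited, say $B'=\{x_1,\dots,x_m\}$ with $B'\subset B$ finite, and note that removing only $B'$ and removing all of $B$ agree on the relevant columns (the extra removals in $B\setminus B'$ occur in columns not visited by the path, hence do not affect accessibility of $\pi_B(a)$ from $\pi_B(b)$ among reindexed edges — one should make this precise by observing that $\pi_B$ and $\pi_{B'}$ differ only by an affine relabeling on the columns actually used). Then I would induct on $m=|B'|$. The base case $m=0$ is trivial. For the inductive step, pick a single removal step of the local reduction: there is a site $x$ (the left endpoint of an alternating pattern, or the adjusted right endpoint for an odd block, after the previous renumberings) such that, setting $\omega'=\pi_x(\omega)$, the local lemma gives $\pi_x(a)\to\pi_x(b)$ in $\omega'$; and the remaining reductions of $\omega'$ remove $m-1$ sites, so the induction hypothesis applied to $\omega'$ yields the claim, using the factorization $\pi_B=(\text{composition of the remaining }m-1\text{ local maps})\circ\pi_x$ on the columns visited. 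Two small points need care: (i) the local lemma requires $a,b\notin\{x,x+1\}\times\Z$ in the appropriately shifted coordinates, which holds because $a,b$ avoid $\Z\times B$ — wait, that is the hypothesis in column indices, i.e.\ the first coordinates of $a$ and $b$ are not in $B$ — hence not in $B'$, and in particular not equal to any $x$ or $x+1$ among the sites being removed; (ii) after $\pi_x$, the endpoints $\pi_x(a),\pi_x(b)$ still avoid the (shifted) remaining removal sites, because $\pi_x$ maps $\Z\setminus\{x,x+1\}$ bijectively onto $\Z$ and carries the remaining sites of $B'$ to the remaining removal sites of $\omega'$.

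\textbf{Main obstacle.} The genuine work is bookkeeping rather than geometry: one must be careful that ``$a\to b$'' (a path avoiding inward trap vertices, i.e.\ not crossing a trap) is preserved under each local reduction — but this is exactly what the local lemma asserts, so it transfers verbatim under composition — and that the renumbering of removal sites after each step is consistent, so that the composite of the local maps really equals $\pi_B$ on the columns in play. I expect the cleanest write-up is simply: ``By applying the local reduction several times, we immediately obtain \dots'', i.e.\ to assert the finite induction on $|B \cap (\text{visited columns})|$ with the local lemma as the one-step move, the only thing worth spelling out being the reduction from $B$ to the finite set $B'$ and the independence of reductions of distinct blocks. No step is conceptually hard; the risk is only in making the index arithmetic of $\sigma_B$ versus the iterated $\sigma_x$'s match up, which I would handle by noting both sides are the unique order-preserving bijection from the visited columns onto their images with the stated gaps.
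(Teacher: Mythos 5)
Your proposal is correct and follows exactly the route the paper takes: the paper dispenses with the proof by saying ``By applying the local reduction several times, we immediately obtain'' the lemma, and your write-up just makes that iteration explicit (finitely many visited columns, induction on the number of removal sites met, consistency of the relabellings), including rightly reading the hypothesis as the first coordinates of $a,b$ avoiding $B$ despite the statement's typo. No discrepancy in substance; yours is simply a more detailed version of the same argument.
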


\begin{corollary}
For any nontrivial cycle $\sigma$ in $\omega$, $\pi_B(\sigma)$ is a nontrivial cycle in $\pi_B(\omega)$.

Almost surely, for any infinite path $\gamma$ in $\omega$ starting from $a\notin B$, $\pi_B(\gamma)$ is an infinite path starting from $\pi_B(a)$.

Almost surely, for any infinite path $\gamma$ in $\omega$ ending at $a\notin B$, $\pi_B(\gamma)$ is an infinite path ending at $\pi_B(a)$.

Almost surely, for any bi-infinite path $\gamma$ in $\omega$, $\pi_B(\gamma)$ is a bi-infinite path.
\end{corollary}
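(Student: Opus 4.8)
The plan is to derive all four assertions of the corollary from the preceding lemma, which states that $a\to b$ in $\omega$ implies $\pi_B(a)\to\pi_B(b)$ in $\pi_B(\omega)$, together with the elementary facts that $\sigma_B$ is a bijection $\Z\to\Z\setminus B$ with $\sigma_B(n)\to\pm\infty$ as $n\to\pm\infty$ (almost surely), and that $B$ is sparse enough that it misses infinitely many integers on each side of $0$. The only genuine subtlety is that the lemma as stated concerns paths that \emph{do not cross a trap} (the notation ``$a\to b$'' was defined with this restriction), whereas the cycles and infinite paths in the statement may traverse trap vertices; so first I would record that any path in $\omega$ either is a trap, or passes through at most one inward trap vertex as its final vertex (since $\omega$ has outdegree one at every vertex and both edges of a trap point into it), and that a cycle in $\omega$ is \emph{either} a trap \emph{or} a genuine cycle avoiding all inward trap vertices; consequently, for nontrivial cycles and for (bi-)infinite paths, the ``$a\to b$'' relation from the lemma does apply along every step.

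For the first assertion, let $\sigma=a_0,a_1,\dots,a_{m}=a_0$ be a nontrivial cycle in $\omega$. As noted above it avoids inward trap vertices, so each consecutive relation $a_i\to a_{i+1}$ is of the admissible kind; moreover a nontrivial cycle visits at least three distinct vertices, infinitely many of which therefore have first coordinate outside $B$ — in fact, since a cycle is a finite closed walk following the unique outgoing edge, all its vertices lie outside $B$, because $B$ consists of first-coordinates of alternance sites and one checks that paths through such columns are precisely the ``zigzags'' that the reduction removes and which cannot close up. Hence $\pi_B$ is defined on every vertex of $\sigma$, the images $\pi_B(a_i)$ are connected cyclically in $\pi_B(\omega)$ by the lemma, and $\pi_B$ restricted to the vertex set of $\sigma$ is injective (it is the restriction of a bijection), so $\pi_B(\sigma)$ is again a cycle; it is nontrivial because $\pi_B$ injective on at least three vertices keeps it from being a $2$-cycle. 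For the second assertion, an infinite forward path $\gamma=a,a_1,a_2,\dots$ starting at $a\notin B$ again avoids inward trap vertices (an inward trap vertex has no outgoing edge in $\gamma$ other than into the trap, which would terminate the path), so $a\to a_k$ in $\omega$ for every $k$; almost surely $\sigma_B(n)\to+\infty$ forces the first coordinates of the $a_k$ to eventually leave $B$, and in fact one argues that $\gamma$ can dwell in columns indexed by $B$ only for the bounded duration of crossing a zigzag block, so $\pi_B(a_k)$ is defined for all large $k$ and these points are joined consecutively by the lemma, giving an infinite path from $\pi_B(a)$; injectivity of $\pi_B$ guarantees this image path does not collapse. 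The third assertion is the time-reversal of the second — here I would reverse orientations, noting that ``ending at $a$'' means there is an infinite backward path, and the same dwelling-time bound applies — and the fourth follows by splitting a bi-infinite path at any vertex with first coordinate not in $B$ (such vertices exist, indeed are cofinite in both directions, almost surely) and concatenating the conclusions of the second and third assertions at that vertex.

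The main obstacle I anticipate is not the topological bookkeeping but the verification that a path in $\omega$ spends only \emph{boundedly} many consecutive steps in columns indexed by $B$ — equivalently, that within any maximal alternating block the walk performs a zigzag of length comparable to the block and then exits — which is exactly what makes $\pi_B(\gamma)$ infinite rather than merely ``eventually defined on a cofinite set that could still be finite after reindexing.'' This is where the explicit description of $\sigma_B$ and the geometry of paths through alternating columns (the content of the local-reduction lemma, applied iteratively) must be combined carefully; once that is in hand, the almost-sure statements are immediate from $\sigma_B(n)\to\pm\infty$, and the cycle case needs no almost-sure hypothesis at all since finiteness of the cycle makes everything automatic.
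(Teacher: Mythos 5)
Your overall strategy is the paper's: apply the local-reduction lemma along the path and then rule out degeneracies (image empty, image trivial, image finite). But there is a genuine gap in your treatment of the first assertion. You claim that \emph{all} vertices of a nontrivial cycle have first coordinate outside $B$, ``because paths through such columns are precisely the zigzags that the reduction removes and which cannot close up.'' That inference is not valid: the zigzag analysis only shows that a non-trapped path entering an alternating block must \emph{cross} it and exit; it does not prevent a hypothetical nontrivial cycle from entering a block, zigzagging across, and closing up elsewhere outside the block. Ruling out any visit of a nontrivial cycle to a $B$-column is essentially as strong as the nonexistence statement one is ultimately trying to prove, so using it here is circular. (The phrase ``infinitely many of which have first coordinate outside $B$'' is also incoherent for a finite cycle.) Because of this, your argument that $\pi_B(\sigma)$ is a nontrivial cycle — injectivity of $\pi_B$ on at least three vertices of $\sigma$ — is incomplete: if $\sigma$ does meet $B$-columns, you must still show it has enough vertices (or surviving edges) outside $B$ to prevent $\pi_B(\sigma)$ from collapsing to a trap. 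The paper handles exactly this point differently: it allows $\sigma$ to meet $B$-columns and argues (i) $\pi_B(\sigma)$ cannot be empty, since otherwise $\sigma$ would lie inside a single alternating block, where the only paths are traps or crossings that exit the block; and (ii) $\pi_B(\sigma)$ cannot be trivial, because at least four edges of $\sigma$, attached to the exit points of blocks of $B$, survive the reduction, so the image has at least four edges.

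For the infinite-path assertions, you correctly identify the key step — that a path can spend only a finite interval of time inside each block of $B$ — but you leave it unproved, explicitly deferring it as ``the main obstacle.'' That step is precisely the substance of the paper's proof (inside a block the only possible behaviours are a trap or a zigzag crossing that exits on the other side, so an infinite path meets infinitely many vertices outside $B$, whose images are distinct), and it is not a formal consequence of the local-reduction lemma as stated; it requires the same case analysis of paths inside alternating blocks. So as written, the proposal both contains an unjustified (and essentially circular) claim in the cycle case and omits the structural lemma on which the infinite-path cases rest.
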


\begin{proof}
Due to the previous lemma (local reduction), we only need to check that no degeneracy could happen: that a nontrivial cycle $\sigma$ can be neither completely removed nor reduced to a trivial cycle; and that infinite paths keep being infinite.

For $\pi_B(\sigma)$ to be empty, the cycle $\sigma$ would have to lie entirely on $B\times\{0,1\}$, hence by connectivity to lie inside one block of alternating $+$ and $-$. However a similar analysis as in the previous lemma shows that the only paths in such blocks are trivial cycles or paths that cross in a zig-zag shape and in particular exit the block. 

Furthermore, $\pi_B(\sigma)$ cannot be trivial since at least four edges of $\sigma$ connect a point outside $B$ to another point (possibly in $B$); consider indeed exit points out of a block of $B$. These edges are all kept in $\pi_B(\omega)$ hence $\pi_B(\sigma)$ must have at least four edges.

Finally, if $\gamma$ is an infinite path in $\omega$, then it must stay only finite intervals of time within each block of $B$. Indeed, the only paths in such blocks are trivial traps or zig-zags crossing the blocks and thus exiting at each side. Therefore $\pi_B(\omega)$ is still an infinite path.
\end{proof}


By symmetry, the same procedure can be applied to the second coordinate to define a sequence $\Vt$ out of $V$ by removing alternating patterns. What we shall call the \emph{reduced model} refers to the sequences $\Ut$, $\Vt$. Due to the previous corollary, we conclude that proving Theorem~\ref{thm:Z} for these sequences will imply the result for $U,V$.


\section{Nonexistence of nontrivial cycles}\label{sec:trivial_cycles}

The aim of this section is to prove the following deterministic statement, where we recall that a \emph{cycle} in $\omega$ is a sequence $a_0,\ldots,a_n$ in $\Z^2$ such that $a_n=a_0$ and $(a_k,a_{k+1})\in\omega$ for $k=0,\ldots,n-1$, and a \emph{trap} is a cycle in $\omega$ of length $n=2$.

\begin{proposition}\label{pro:trivial_cycles}
The only cycles in $\omega$ are traps.
\end{proposition}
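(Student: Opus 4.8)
The plan is to argue by contradiction using the planarity of $\omega$ together with a parity/counting argument on cells. Suppose $\sigma$ is a nontrivial cycle in $\omega$. By the corollary of Section~\ref{sec:reduction}, we may pass to the reduced model, so we may assume that $U$ and $V$ contain no alternating pattern $(+1,-1,+1)$ or $(-1,+1,-1)$; in particular, source cells and trap cells are never adjacent. Since $\omega$ is planar (using the embedding where traps are drawn as two disjoint arcs), the cycle $\sigma$, once realized in $\R^2$, is a Jordan curve (after resolving the two arcs of any traversed trap cell consistently), so it separates the plane into an inside region and an outside region. Let $R$ denote the closed inside region, a union of cells of $\Z^2$. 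The strategy is to derive a contradiction by examining the cells of $R$ and the vertices on $\partial R = \sigma$.

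First I would set up the local picture. Each vertex of $\Z^2$ has exactly one outgoing edge in $\omega$; a vertex on the cycle $\sigma$ has its outgoing edge along $\sigma$, and may or may not be the head of another edge of $\sigma$. I would classify the cells incident to $\sigma$: near each vertex $v$ of $\sigma$, look at the four cells around $v$ and the (one or two) edges of $\sigma$ at $v$. The key structural input is that, because of the no-alternating-pattern reduction, whenever an edge of $\omega$ crosses a cell, the neighbouring cells sharing the relevant side are \emph{not} sources; this rigidity forces the diagonal edges inside $R$ to line up into longer monotone ``staircases''. I would then count: sum over all cells of $R$ the quantity (number of $\omega$-edges crossing that cell), which equals $\#\{\text{crossings in }R\} + 2\cdot\#\{\text{traps in }R\}$, and compare this with an edge count obtained by following how the unique-outgoing-edge property propagates the cycle inward. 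The cleanest incarnation is a discrete Gauss--Bonnet / index argument: assign to each cell a local rotation contribution and to each boundary vertex a turning angle, and observe that the total turning of the Jordan curve $\sigma$ must be $\pm 2\pi$, whereas the monotonicity constraints force every turn to have the same sign and a strict lower bound on their number, which one shows is incompatible with closing up unless $\sigma$ is a single trap.

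Concretely, here is the argument I expect to work. Orient $\sigma$ and follow it. At a vertex $v$ of $\sigma$ that is not the head of a second edge of $\sigma$, the walk arrived from outside $\sigma$'s traversed-so-far portion; the genuine obstruction is to control what happens at vertices of $\sigma$ of ``in-degree $2$'' within $\sigma$, i.e.\ inward trap vertices lying on $\sigma$. The no-alternating-pattern hypothesis is exactly what prevents two traps from sharing structure that would let $\sigma$ make a short detour. I would show: (i) each diagonal edge of $\sigma$ crossing a cell $c$ with $c\subset R$ forces the opposite-corner edge across $c$ to also belong to $\omega$, hence $c$ is a trap, hence $c$ cannot be on the boundary — so \emph{every} cell met by $\sigma$ from the inside is a trap; (ii) but a cell crossed by a single edge of $\sigma$ is, by definition, a crossing, not a trap, unless the opposite edge is also in $\omega$ and also in... and chase this to a contradiction with the finiteness of $R$, since it would propagate a doubly-infinite staircase; (iii) conclude that $\sigma$ cannot have any cell strictly inside, so $R$ is a single cell, i.e.\ $\sigma$ is a trap, contradicting nontriviality. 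The main obstacle, and the step I would spend the most care on, is (i)--(ii): making rigorous the claim that the reduced model forbids a cycle from ``turning'' using a crossing cell, so that a genuine cycle would be forced to be an infinite monotone staircase — this is where the combinatorial case analysis around each of the finitely many local configurations of a cell (source / crossing / trap) and its neighbours must be done carefully, and where the absence of the alternating pattern is essential.
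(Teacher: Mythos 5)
Your setup (pass to the reduced model, use planarity and the Jordan curve theorem to speak of the interior of a hypothetical nontrivial cycle) matches the paper's, but the core of your argument rests on a local rigidity claim that is false, so there is a genuine gap. Claim (i) asserts that an edge of $\sigma$ crossing a cell $c$ on the inner side forces the opposite diagonal of $c$ to lie in $\omega$, hence that $c$ is a trap. Nothing forces this: the cell crossed by the edge from $(x,y)$ to $(x+U_y,y+V_x)$ is a trap if and only if \emph{both} $U_{y}\ne U_{y+V_x}$ and $V_x\ne V_{x+U_y}$, i.e.\ the cell sits at the intersection of a horizontal and a vertical sign-change line; whether the cell lies inside the cycle is irrelevant, and the reduced model (sign blocks of length $\ge2$) does not make such intersections disappear --- it only keeps them non-adjacent to source cells. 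In the reduced model the typical cell crossed by a path is a plain crossing. Consequently the mechanism you want in (ii) does not exist either: the walk turns by $90^\circ$ every time it crosses a single sign-change line (this is visible in the sample path of Figure~1 and in Figure~\ref{fig:network}), so a hypothetical cycle is in no way forced into a monotone staircase, and turns of both signs occur, which also undermines the Gauss--Bonnet variant (``every turn has the same sign'') you sketch. If a purely local obstruction to turning existed, no global argument would be needed; the fact that paths visibly spiral and turn shows the obstruction must be global.

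What the paper actually does at this point is a double count of what a nontrivial cycle would enclose, and the contradiction is an off-by-one parity, not a local impossibility. On one hand, the cycle is mapped to a simple planar cycle in the mid-edge graph $\M$ built on the block lattice $\B$ of sign-change lines, and an induction on the number of enclosed vertices of $\B$ (Lemma~\ref{lem:numbers}) shows that any such mid-edge cycle encloses exactly one more source than traps, $s=t+1$. On the other hand, a conservation argument inside the cycle (Lemma~\ref{lem:conservation_reduced}) --- counting starts of paths at source vertices against ends and merges at trap vertices, with planarity ensuring that merges into the cycle from inside correspond to enclosed trap cells --- gives $s=t$. The reduced model enters only to guarantee that trap and source cells are never contiguous, so that this bookkeeping of starts/ends/merges is unambiguous; it is not used to constrain the geometry of the cycle itself. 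To repair your proof you would need to replace steps (i)--(iii) by some such global counting (or an equivalent Euler-characteristic-type argument); as written, the contradiction you aim for cannot be reached.
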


Let us first give a sketch of the argument. The heuristic idea is that a cycle should contain as many source and trap cells due to a conservation argument: each path inside the cycle has a beginning and an end (or merges into the cycle); and on the other hand, each source cell is the beginning of 4 paths, while each trap cell sees the end of 2 paths and the merging of 4 paths into 2, thus contributing a net loss of 4 paths. However, a parity constraint due to the alternance of up/down and left/right slopes in the path forces these numbers to differ by 1 unit, leading to a contradiction.

The conservation argument is made somewhat less transparent in situations with neighbouring source and trap cells, therefore for this part we will actually work with the reduced model from the previous section.


We start with the parity constraint, which will decompose into two steps. We first consider cycles on a new graph $\M$, for which we prove that any cycle must circumscribe one more source cell than trap cells. We further argue than any nontrivial (non trap) cycle in $\omega$ maps into a cycle of the previous type, in a way that preserves the number of circumscribed source and trap cells.

\subsection{Parity constraint}

Let us first notice that the set of trap and source cells has a simple structure: they are located at the lattice of intersections of the sign change lines (definition follows), and traps and sources alternate in a checkered pattern (cf.\ Figure~\ref{fig:blocks}).

\begin{definition}
The \emph{sign change lines} are the vertical lines $\{x+\frac12\}\times\R$ for which $x$ is such that $V_x\ne V_{x+1}$ and the horizontal lines $\R\times\{y+\frac12\}$ for which $y$ is such that $U_y\ne U_{y+1}$.

The connected components of the complement of this set of lines are called \emph{sign blocks}. Thus, $(U_y,V_x)$ is constant for $(x,y)$ inside a given sign block. 

The \emph{block lattice} $\B$ is the lattice induced by the intersections of the sign change lines, i.e.\ whose vertices are the traps and sources, and whose edges connect nearest neighbours on the sign change lines.
\end{definition}

\begin{figure}[h]
	\includegraphics[width=11cm]{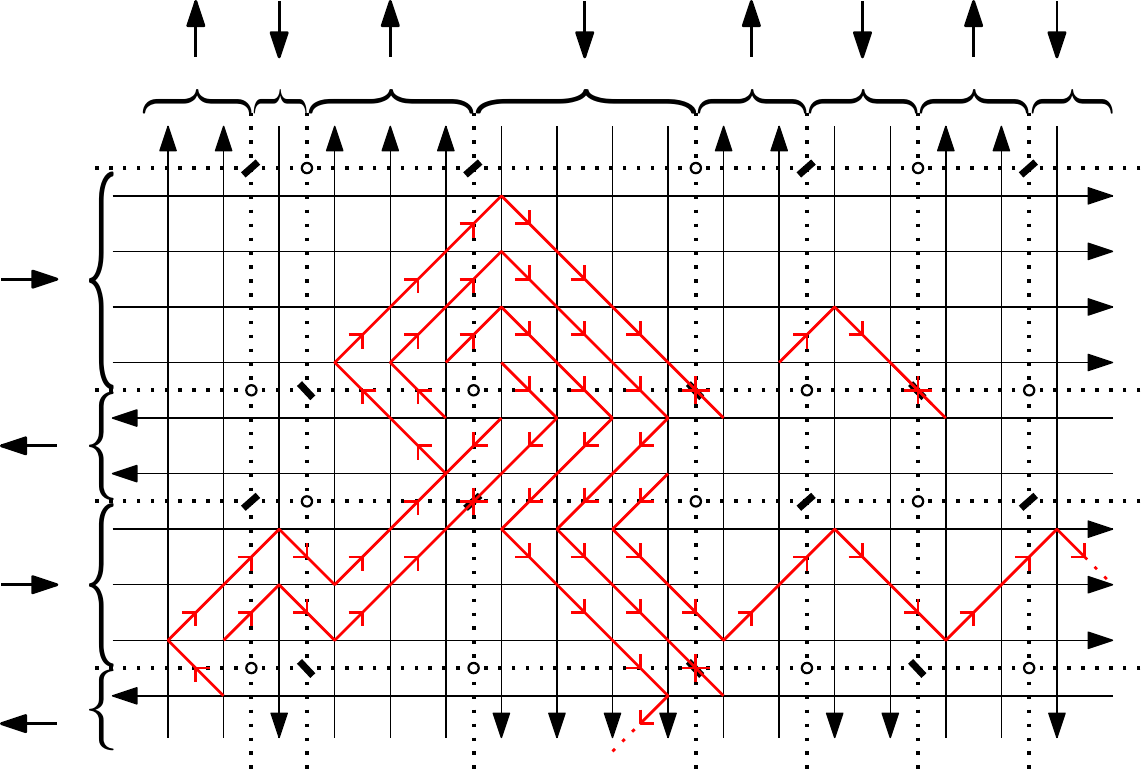}
	\caption{Dotted lines are the sign change lines. Their intersections define the block lattice $\B$, whose vertices are either sources (circles) or traps (diagonal strokes). A few examples of paths are represented in red. } \label{fig:blocks}
\end{figure}

\begin{remark*}
 Although the graph structure of $\B$ is that of $\Z^2$, its embedding in $\R^2$ depends on $\omega$. This is precisely the feature that distinguishes $\B$ from $\Z^2$.
\end{remark*}
\begin{definition}
The \emph{mid-edge graph} $\M$ (see Figure~\ref{fig:network}, left) is a directed (non planar) graph whose vertex set consists of the middle points of the edges of $\B$, and whose edges connect middle points across a face of $\B$ (a ``block'') according to its orientation in the wider sense, i.e.\ according to the following rule (up to symmetries i.e. rotations and reflection):
\begin{center}
	\includegraphics[width=3cm]{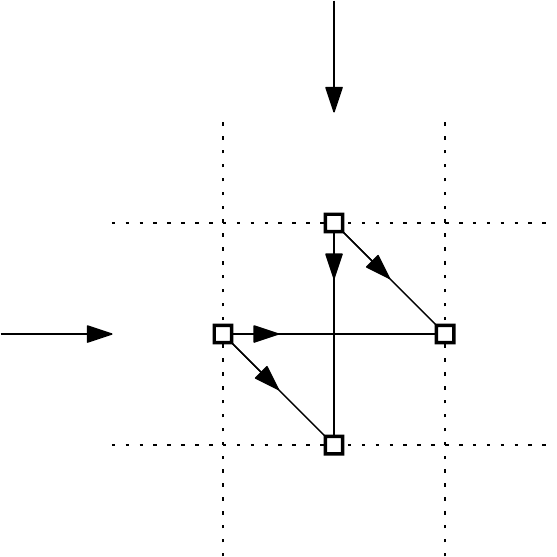}
\end{center}
Thus each vertex of $\M$ has two neighbours, and if a path in $\omega$ crosses two edges of $\B$ consecutively then their middle points are connected in $\M$, in the order of their crossing. See Figure \ref{fig:network}.



To avoid confusions with paths, i.e.\ paths of $\omega$, we call paths in $\M$ \emph{mid-edge paths}. 
And a \emph{mid-edge cycle} is a cycle in $\M$.
\end{definition}

\begin{figure}[h]
	\includegraphics[width=11cm]{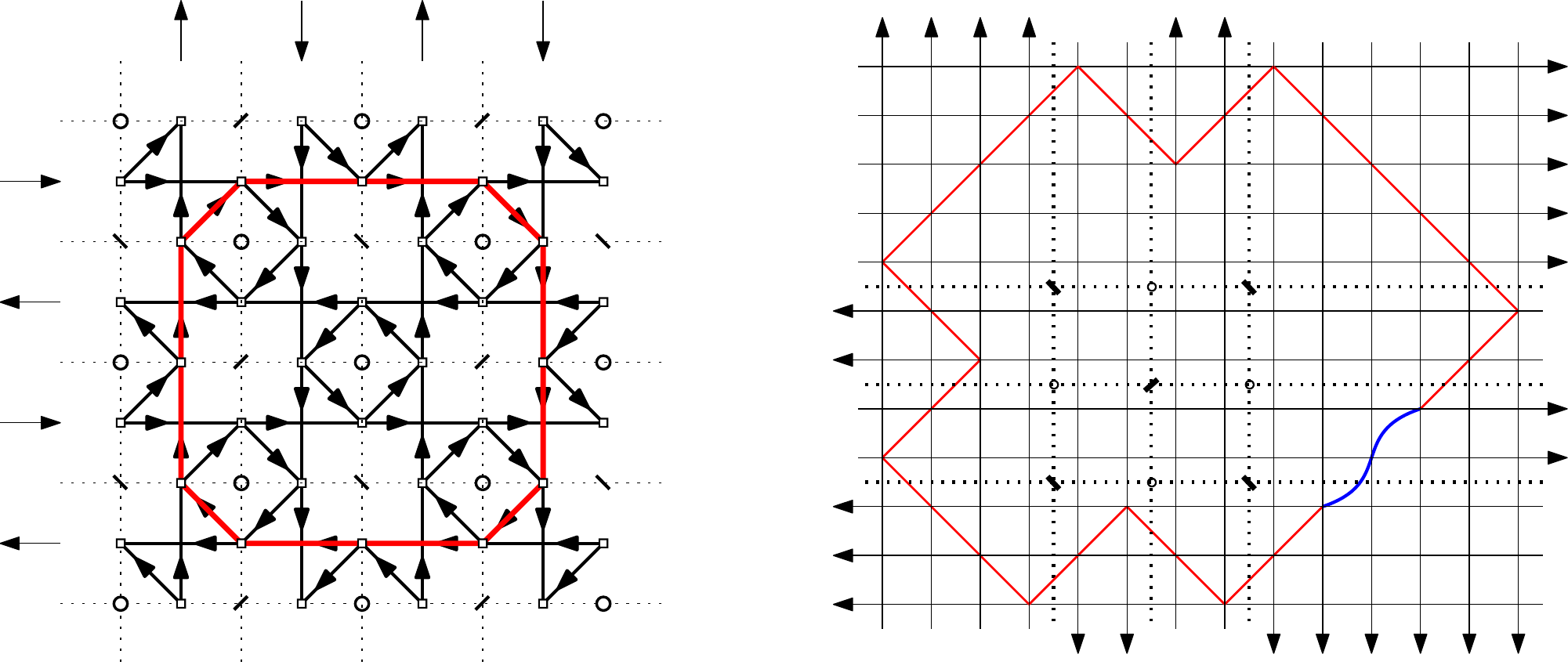}
	\caption{(Left) Mid-edge graph $\M$, and an example of mid-edge cycle, enclosing 5 sources and 4 traps. The dotted lines correspond to the sign change lines, where the signs are suggested by the arrows to the left and top sides of the graph. The square marks are the mid-edge vertices of the block lattice $\B$, i.e.\ the vertices of $\M$. The circles are the sources and the diagonal dashes are the traps. (Right) The corresponding hypothetical cycle in $\omega$ with an ``illegal'' edge (in blue), added for the sake of illustration.}\label{fig:network}
\end{figure}

To a nontrivial cycle in $\omega$ (if there would exist any), we associate a mid-edge path given by the sequence of edges of $\B$ that it crosses: the definition of $\M$ ensures that these middles of edges form a path in $\M$. Note that the original cycle doesn't visit traps or sources hence this is well defined; and that the resulting path is obviously again a cycle.


A cycle cannot cross the same edge of $\B$ more than once. Let us suppose that such a cycle exists and let us assume w.l.o.g. that it crosses a vertical edge of $\B$ from left to right in two locations. Then by planarity, and because it is a simple path, there must be a crossing in the opposite direction (right to left) in-between the left-to-right crossings. This leads to a contradiction and proves that the previous procedure does not introduce multiple edges. The resulting cycle is thus a simple cycle, which can be homeomorphically deduced from the initial cycle. This also ensures that it is planar, i.e.\ without crossings in its simple embedding. By Jordan curve theorem, we may then consider the interior of this cycle.

\begin{lemma}\label{lem:numbers}
Consider a simple planar mid-edge cycle $\sigma$. Denoting by $s$ and $t$ the numbers of sources and traps in the interior of $\sigma$, we have
\[s=t+1.\]
\end{lemma}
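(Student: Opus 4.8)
The plan is to prove the identity $s=t+1$ by a discrete Gauss--Bonnet / Euler-characteristic count applied to the planar region $D$ bounded by $\sigma$. The mid-edge cycle $\sigma$ traverses a closed sequence of blocks (faces of $\B$), turning at each one according to the local $\M$-rule, and the region $D$ is tiled by the blocks of $\B$ lying inside $\sigma$, together with the ``half-blocks'' cut by the crossing edges along $\sigma$ itself. The key observation is that sources and traps are exactly the vertices of $\B$, and that the way $\M$-edges turn inside a block encodes whether the opposite pair of $\B$-vertices of that block are sources or traps: recall that sources and traps alternate in a checkerboard pattern on $\B$, so on each block two opposite corners are sources and the other two are traps, and the $\M$-rule always ``cuts the corner'' at a source on one side and runs straight or turns toward the trap corners in a prescribed way. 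Thus walking along $\sigma$ and recording left/right turns is equivalent to recording which corners of $\B$ on the inside of $\sigma$ are sources versus traps.

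Concretely, I would first set up the bookkeeping. Since $\sigma$ is a simple closed curve in the plane made of $\M$-edges, the total turning of $\sigma$ is $\pm 2\pi$ (Hopf Umlaufsatz for polygonal curves); orienting $\sigma$ so that $D$ is on its left, the sum over vertices of $\sigma$ of the exterior turning angles equals $+2\pi$, i.e.\ if $L$ and $R$ denote the numbers of left and right turns of $\sigma$ (each $M$-vertex of $\sigma$ being one or the other, up to the finitely many ``straight'' passages which contribute $0$), then $L-R=4$ when turns are by $\pm\pi/2$. Next I would relate $L$ and $R$ to $s$ and $t$. Each block of $\B$ is a quadrilateral with two source-corners and two trap-corners (checkerboard), and the $\M$-rule within a block is, up to symmetry, the fixed picture in the paper: the two $\M$-edges of a block ``point away'' from the trap corners and ``hug'' the source corners. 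Hence as $\sigma$ enters and exits a given interior-incident block, the turn it makes at the shared mid-edge vertex is determined by whether the $\B$-vertex it rounds on the inside of $\sigma$ is a source or a trap: rounding a source corner on the inside is (say) a left turn, rounding a trap corner on the inside is a right turn. Summing over the boundary of $D$, each source of $\B$ strictly inside $D$ is ``rounded on the inside'' a net of... — this is where the clean combinatorial identity comes from.

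The cleanest route is therefore to do a global count rather than a turn-by-turn argument: triangulate/tile $D$ by the blocks it contains (interior blocks contribute fully, boundary blocks contribute a corner piece cut off by $\sigma$) and compute the Euler characteristic $\chi(D)=1$ in two ways, once as $V-E+F$ over this cell structure and once by grouping contributions around each vertex of $\B$. Around an interior vertex $v$ of $\B$, the four incident blocks each contribute a quarter-disc, and the angle deficit / combinatorial defect at $v$ depends only on whether $v$ is a source or a trap — by the $\M$-rule a source contributes one value and a trap the opposite value, with the difference being exactly $1$ after normalisation. Vertices of $\B$ on $\sigma$ (if any) and the mid-edge vertices on $\sigma$ contribute boundary terms that telescope to the total turning $+2\pi$ of $\sigma$. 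Collecting everything, $\chi(D)=1$ rearranges precisely to $s-t=1$. I would write this as: $1=\chi(D)=\big(\text{bulk term}\big)+\big(\text{boundary term}\big)=(s-t)+0$, or with an explicit normalisation constant, after carefully checking the one block picture and its symmetries.

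The main obstacle I expect is \emph{the boundary bookkeeping}: making precise how the half-blocks cut off by $\sigma$ contribute, ensuring the mid-edge vertices of $\sigma$ are correctly assigned turn $\pm\pi/2$ (or $0$), and confirming that no $\B$-vertex lies \emph{on} $\sigma$ (it cannot, since $\sigma$ runs along mid-edges and the original $\omega$-cycle avoids sources and traps, but this needs a sentence). A secondary, purely mechanical obstacle is verifying the single ``master picture'' of the $\M$-rule inside a block together with all eight of its rotations/reflections, and checking in each case that ``inside-corner is a source'' $\Leftrightarrow$ ``$\sigma$ turns left'' — once that case check is done, the identity $L-R=4$ together with the pairing of left/right turns with interior source/trap corners yields $s=t+1$ after accounting for the fact that each interior source corner is counted by the four blocks around it. I would organise the write-up so that this case analysis is isolated into one short lemma about a single block, after which the global count is a one-paragraph application of the Jordan curve theorem and the polygonal Hopf Umlaufsatz.
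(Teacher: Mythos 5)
Your route (a discrete Gauss--Bonnet / turning-number count) is genuinely different from the paper's, which proves the lemma by induction on the number of vertices of $\B$ enclosed by $\sigma$, performing an explicit local surgery at the highest--leftmost vertex of $\sigma$ that either shrinks the cycle or splits it into two smaller cycles, and tracking how $s$ and $t$ change under the surgery. As written, however, your argument has a genuine gap, in fact two.

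First, the local dictionary you rely on is not correct as stated. The edges of $\M$ across a block are of two kinds: ``corner cuts'', which indeed hug the two source corners of the block, and straight crossings between opposite sides. Consequently the turning angles of $\sigma$ (in the combinatorial $\Z^2$ picture of $\B$; in the true embedding the blocks are rectangles of arbitrary aspect ratio, so the angles are not even universal constants) are multiples of $\pi/4$, not $\pm\pi/2$ with finitely many zeros, and $L-R=4$ is not the right bookkeeping. Moreover, a trap lying just inside $\sigma$ typically corresponds to a straight (turn $0$) passage rather than a right turn, and one and the same source corner can be rounded with turn $\pi/4$ at one mid-edge vertex and $\pi/2$ at another, so ``inside corner is a source $\Leftrightarrow$ left turn'' fails.

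Second, and more fundamentally, the bulk term of your Gauss--Bonnet identity is assumed rather than proved. The tiling of the plane by blocks is geometrically flat: every vertex of $\B$ strictly inside $\sigma$ has total angle $2\pi$ around it, so there is no geometric angle defect, and the total turning of $\sigma$ equals $2\pi$ for \emph{any} simple cycle -- by itself it carries no information about how many sources or traps lie inside. A vertex of $\B$ deep inside the enclosed region is never rounded by $\sigma$ at all, so no turn count along $\sigma$ alone can see it. To make your scheme work you would have to define a combinatorial curvature or index attached to the $\M$-structure (morally: index $+1$ at sources, around which $\M$ has a small counterclockwise $4$-cycle, and $-1$ at traps, around which it has none) and then prove the discrete Poincar\'e--Hopf identity ``total turning of $\sigma$ equals the sum of interior indices''. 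That identity is essentially the lemma itself; your proposal asserts it (``a source contributes one value and a trap the opposite value, with the difference being exactly $1$ after normalisation'') without defining the defect or giving any argument. The paper's induction with local surgeries -- or, in a different direction, the counting of path starts, ends and merges as in Lemma~\ref{lem:conservation_reduced} -- is precisely the work needed to fill this hole, and none of it is replaced by the Umlaufsatz step, which is the only part of your argument that is actually established.
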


\begin{proof}
The proof procedes by induction on the number of vertices of $\B$ (both sources and traps) in the interior of~$\sigma$.

The lemma holds in the case when $\sigma$ surrounds only one vertex of $\B$. Given the graph $\M$, this one vertex indeed has to be a source.

Let $n\in\N$, $n\ge1$, and assume that the lemma holds for simple planar mid-edge cycles containing at most $n$ vertices of $\B$ in their interior. Let $\sigma$ be a simple planar mid-edge cycle containing $n+1$ vertices of $\B$ in its interior.

Up to symmetrizing the argument, we may assume that $\sigma$ turns counterclockwise.

Consider, among the leftmost vertices of $\sigma$, the one that is highest, and denote it by $x$. Due to the location of $x$ and the orientation of $\sigma$, the last step before $x$ was at the north-east of $x$. Regarding the next step after $x$, two cases may occur: either $\sigma$ goes in the south-east or the south direction.

In the case when $\sigma$ goes in the south-east direction after $x$, Figure~\ref{fig:local_modif_1} indicates how to modify locally the path around $x$ to construct a simple planar mid-edge cycle $\sigma'$ that now contains one source and one trap fewer than $\sigma$. By the assumption, $\sigma$ therefore fulfills the lemma. Note that the case of a cycle of length~4 is excluded by the fact that $n+1\ge2$.

\begin{figure}[h]
	\includegraphics[width=11cm,page=2]{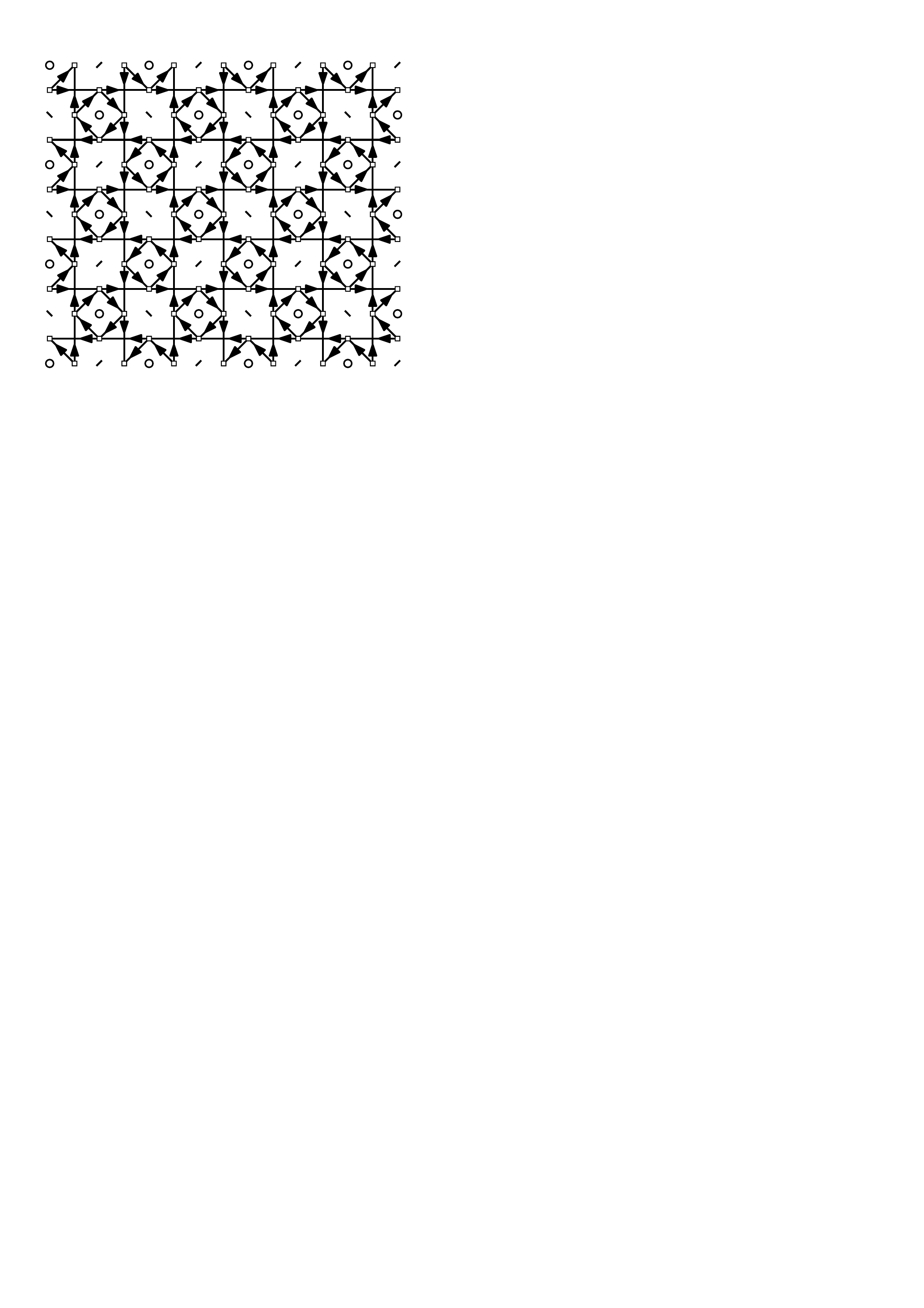}
	\caption{Reduction of the size of the cycle in the proof of Lemma~\ref{lem:numbers}. The top sketches represent the possible shapes of $\sigma$ around the leftmost vertex $x$, while the line below shows the local transformation into a smaller mid-edge cycle $\sigma'$. The grey area represents a part that is necessarily in the interior of $\sigma$, which justifies the possibility of defining $\sigma'$ as shown. }\label{fig:local_modif_1}
\end{figure}

Assume we are in the other case, i.e.\ that $\sigma$ goes south after $x$. If, before $x$, $\sigma$ comes from the north-east and then from the south-east, then we can use again the simplification from Figure~\ref{fig:local_modif_1}, up to a quarter-turn rotation. We may therefore assume that the steps before $x$ were from the north-east and then from the east, whence the different situations are displayed in Figure~\ref{fig:local_modif_2}. Let us further distinguish whether the source that lies south-east from the source next to $x$ is inside or outside of $\sigma$. If it is outside, then the figure shows how to introduce two smaller planar simple mid-edge cycles $\sigma_1$ and $\sigma_2$ whose numbers of sources and traps satisfy by induction $s_1=t_1+1$ and $s_2=t_2+1$, but are also given by $s=s_1+s_2-1$ and $t=t_1+t_2$, which implies the lemma for $\sigma$. If it is inside, then $\sigma$ can be reduced to a simple planar mid-edge cycle $\sigma'$ in the way shown by Figure~\ref{fig:local_modif_2} and since $\sigma'$ contains 2 fewer sources and traps than $\sigma$, the lemma follows as well by induction.

\begin{figure}[h]
	\includegraphics[width=11cm,page=3]{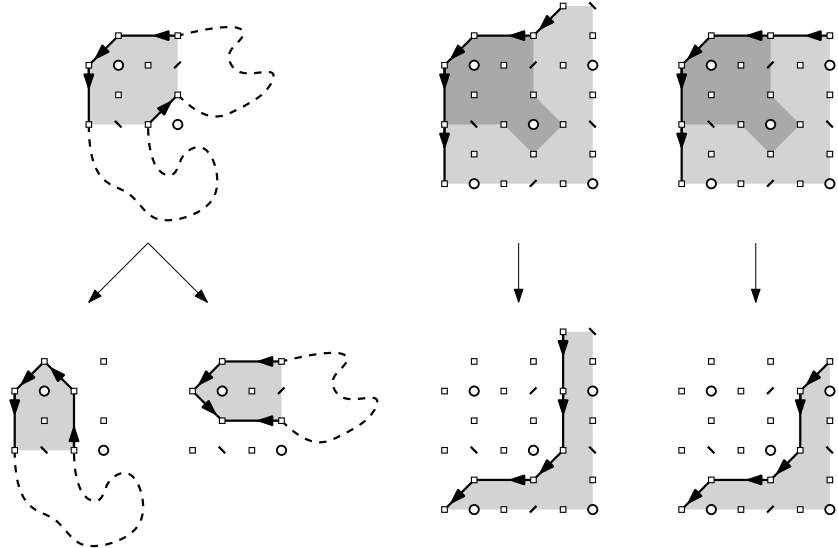}
	\caption{Reduction of the size of the cycle in the proof of Lemma~\ref{lem:numbers}, second case. Here, on the first line the dark grey region is \emph{assumed} to be in the interior of $\sigma$, which forces the light grey region to be as well.}\label{fig:local_modif_2}
\end{figure}
\end{proof}

\subsection{Conservation argument}

Notice that the number of source cells in the interior of a cycle is the same as the number of sources in the interior of the corresponding mid-edge cycle, and similarly for traps. The conclusion of the proof then follows by contradiction by comparing the previous lemma with this one:

\begin{lemma}\label{lem:conservation_reduced}
Any nontrivial cycle in the reduced model must enclose an equal number of source and trap cells.
\end{lemma}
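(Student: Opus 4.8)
The plan is to turn the conservation heuristic into an exact double count. Fix a nontrivial cycle $\sigma$ in the reduced model. Since $\omega$ is planar in the chosen embedding and $\sigma$, visiting no source or trap cell, is a simple closed polygonal curve, it bounds a region; let $R$ be its interior, $\overline{R}$ its closure, and write $n$, $L$ for the numbers of lattice points of $\Z^2$ lying strictly inside $R$, respectively on $\sigma$, and $s$, $c$, $t$ for the numbers of source, crossing and trap cells enclosed by $\sigma$. The target $s=t$ will come from two independent linear relations among $s,c,t,n,L$: one forced by the fact that every vertex of $\omega$ has out-degree exactly $1$, the other by Pick's theorem applied to $\sigma$.

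For the first relation I would count the edges of $\omega$ with both endpoints in $\overline{R}$. On one hand each vertex of $\overline{R}$ contributes exactly its outgoing edge, and that edge stays in $\overline{R}$ — for a vertex strictly inside because no $\omega$-edge can cross the embedded curve $\sigma$, for a vertex of $\sigma$ because its outgoing edge is the following edge of $\sigma$; conversely every such edge is the outgoing edge of its (necessarily in $\overline{R}$) tail. So this number is $n+L$. On the other hand, grouping the same edges by the cell each one crosses, a source cell contributes $0$, a crossing cell $1$, a trap cell $2$ — here one uses again that a vertex of $\sigma$ has its only outgoing edge along $\sigma$, to see that the tail(s) of the edge(s) of any \emph{enclosed} crossing or trap cell are strictly inside $R$, while the $L$ crossing cells traversed by $\sigma$ contribute their (boundary) edges. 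Equating the two counts gives a relation of the form $n=c+2t$. For the second relation, view $\sigma$ as a simple lattice polygon whose $L$ edges are steps $(\pm1,\pm1)$: each of them cuts a distinct cell into two triangles of area $\tfrac12$, so $\mathrm{Area}(R)=(s+c+t)+\tfrac L2$, whereas Pick's theorem gives $\mathrm{Area}(R)=n+\tfrac L2-1$; hence $s+c+t=n-1$. Combining the two — which requires the careful boundary analysis described below — should collapse everything to the desired $s=t$.

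The reduced model enters to keep the cell-by-cell accounting additive. A short case analysis on the four sign values around a source or trap cell shows that, in the reduced model (where no sign is isolated), all eight neighbouring cells of a source or trap cell are crossing cells; in particular source and trap cells are pairwise non-adjacent and cannot share a corner or an edge, which is what legitimizes the rule ``contribution $0/1/2$ per cell''. The hard part will be the boundary bookkeeping around $\sigma$: deciding precisely which cells count as enclosed, which are traversed by $\sigma$, and — for a crossing or trap cell meeting $\sigma$ only at a vertex — which of its corners lie in $\overline{R}$, so that the contributions of all cells and $\omega$-edges incident to $\sigma$ combine to give the clean identity $s=t$ rather than one distorted by a boundary correction. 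Once $s=t$ is established the lemma is done, and comparison with Lemma~\ref{lem:numbers} (which, via the associated mid-edge cycle carrying the same enclosed source and trap counts, forces $s=t+1$) supplies the contradiction proving Proposition~\ref{pro:trivial_cycles}.
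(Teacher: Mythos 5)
There is a genuine gap, and it sits exactly at the step you defer to ``careful boundary bookkeeping'': you never actually combine your two relations, and when one does combine them as stated they do \emph{not} give $s=t$. Your edge count (each vertex of $\overline R$ contributes its unique outgoing edge, which stays in $\overline R$; grouping the same edges by the cell they cross gives $0/1/2$ per enclosed source/crossing/trap cell plus one per cell traversed by $\sigma$) yields $n+L=L+c+2t$, i.e.\ $n=c+2t$, and your area count with Pick's theorem yields $s+c+t+\tfrac L2=n+\tfrac L2-1$, i.e.\ $s+c+t=n-1$. Substituting gives $s=t-1$, not the claimed $s=t$. I checked your two intermediate identities and they appear to be correct consequences of the structural facts you invoke (out-degree $1$; two edges crossing a cell are opposite, so a nontrivial cycle contains no trap edge and no two of its edges share a cell; planarity), so the discrepancy is not a slip in my reading: your scheme, carried out, proves a different identity from the one in the statement. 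Since the lemma concerns a hypothetical object this is not a logical absurdity --- indeed $s=t-1$ would still contradict Lemma~\ref{lem:numbers} and hence still yield Proposition~\ref{pro:trivial_cycles} --- but it is not a proof of Lemma~\ref{lem:conservation_reduced} as stated, and asserting that the computation ``should collapse to $s=t$'' papers over the very unit that the whole section is about.

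For comparison, the paper's proof is a different bookkeeping altogether: it counts starts, ends and merges of paths strictly inside the cycle (source vertices, inward and outward trap vertices), using that the components inside are trees or cycle-rooted trees, and the delicate term is precisely the boundary one --- paths merging \emph{with the cycle} from its interior, which by planarity occur only at outward trap corners lying on $\sigma$ whose trap cell is inside; these compensate the outward corners that your ``strictly inside'' accounting misses and produce $4s=4t$. That is exactly the kind of correction your deferred boundary analysis would have to generate, and as your double count is currently set up (enclosed cells versus the $L$ traversed cells, with no term sensitive to trap cells touching $\sigma$ at a corner) it cannot produce it. You should either locate and justify such a boundary term, or accept that your method proves $s=t-1$ and restructure the argument (and its interface with Lemma~\ref{lem:numbers}) accordingly; as written, the proposal does not establish the stated lemma.
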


Note that no path crosses a source cell and that the only path that crosses a trap cell is a trivial cycle. Thus, a nontrivial cycle does not cross diagonally any source or trap cells and the number of source and trap cells it encloses is therefore a well-defined integer.

\begin{proof}
Recall that, in the reduced model, each block of consecutive lines sharing the same orientation has size at least 2, so that trap cells and source cells are never contiguous.
Therefore, it always holds true that source vertices are the starts of paths, and that among the four corners of any trap, two of them (the \textit{outward trap vertices}) are points where two paths merge into one, and at the other two (the \textit{inward trap vertices}) there is one path entering the trap and thus ending there. 

As is readily checked on each connected component individually (the components are either binary trees or cycle-rooted binary trees), the number of starts of paths inside the cycle is equal to the total number of ends and of merges of paths inside the cycle plus the number of merges of paths with the cycle from its interior. Starts correspond to source vertices, ends to inward trap vertices, and merges to outward trap vertices. Notice also that, by planarity constraints, when a path merges with the cycle from inside, then the trap cell of this outward trap vertex must lie \textit{inside} the cycle. We conclude from this that the number of source vertices inside the cycle is equal to the number of trap vertices inside the cycle whose cell lies inside the cycle. This proves the lemma.
\end{proof}

Although the previous lemma suffices to prove the main theorem, we sketch a proof of the following slightly stronger result about the original (non-reduced) model. This indeed shows how the initial intuition could be made formal, although the introduction of the reduced model leads to a more transparent proof.

\begin{lemma}\label{lem:conservation}
Any nontrivial cycle in the initial model must enclose an equal number of sources and traps.
\end{lemma}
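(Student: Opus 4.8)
The plan is to make the conservation heuristic of this section rigorous directly in the initial model, following the scheme used to prove Lemma~\ref{lem:conservation_reduced} but without the convenience that source and trap cells are separated. Fix a nontrivial cycle $\sigma$ in $\omega$. As a preliminary one checks that $\sigma$ crosses no source cell (these carry no edge) and no trap cell (a trap edge would be the $\sigma$-successor of one of its endpoints, forcing $\sigma$ to be the $2$-cycle through that cell), so $\sigma$ runs only through crossing cells, cutting each into an inner and an outer triangle; consequently every source or trap cell lies entirely inside or entirely outside the simple closed curve $\sigma$ (here I use that $\omega$ is planar, so $\sigma$ has a simple embedding and Jordan's theorem applies), and the enclosed numbers $s$ and $t$ are well-defined integers. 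All the counting below takes place inside the closed disk $R$ bounded by $\sigma$.

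Next I would set up the functional-graph bookkeeping. Let $\omega_R$ be the subgraph of $\omega$ induced on the vertices of $R$. Since $\omega$ is planar, no edge crosses $\sigma$, so the unique outgoing edge of every vertex of $R$ again ends in $R$ --- for a vertex on $\sigma$ it is simply the $\sigma$-successor edge. Hence every vertex of $R$ has out-degree $1$ in $\omega_R$, so $\omega_R$ has as many edges as vertices and each of its connected components is unicyclic; reversing orientations (in-degrees in $\omega$ are at most $2$, as one checks cell by cell) exhibits each component as a cycle-rooted binary tree, one rooted at $\sigma$ and the others at the enclosed traps. In a cycle-rooted binary tree the number of leaves (vertices with no incoming edge) equals the number of branch vertices (in-degree $2$) --- and this does not see the length of the root cycle, so any nested nontrivial cycle is harmless here. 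Summing over components, the number of vertices with $d^-=0$ strictly inside $\sigma$ equals the number of vertices with $d^-=2$ in $R$; this is the exact form of the ``conservation of path germs'' that is being asserted heuristically.

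The remaining, and only delicate, point is to convert this equality of vertex counts into the equality $s=t$ of cell counts, and this is where the non-reduced setting costs work: a corner may belong to a source cell while simultaneously being an inward or outward vertex of an adjacent trap cell, so the correspondences ``source cell $\leftrightarrow$ its four corners'' and ``trap cell $\leftrightarrow$ its inward/outward corners'' are neither injective nor surjective onto the relevant vertex sets, unlike in Lemma~\ref{lem:conservation_reduced}. I would handle this by doing the count per cell rather than per vertex label: each edge incident to a vertex $v$ is the diagonal of a \emph{unique} cell having $v$ as a corner, so one may attribute to each enclosed source cell its four ``outgoing-only'' corners, to each enclosed trap cell the incoming edge at each inward corner together with the ``two-into-one'' merge at each outward corner, and nothing to crossing cells; organised this way the contributions of neighbouring source and trap cells add without interference, and combining with the same planarity remark as before (a path merging into $\sigma$ from inside does so at an outward trap vertex whose cell is enclosed) yields $s=t$. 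The main obstacle is precisely this last reconciliation: isolating a cell-local quantity whose sum over the enclosed cells equals $s-t$ and at the same time equals the boundary contribution from merges into $\sigma$, which must be shown to vanish. Once that is in place, everything else is a transcription of the reduced-model argument.
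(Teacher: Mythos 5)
Your first two paragraphs are sound, but they only reproduce the part of the argument that already works in the reduced model: $\sigma$ passes only through crossing cells, so $s$ and $t$ are well defined, and since every vertex of the enclosed region has out-degree $1$ and in-degree at most $2$, the handshake identity gives that the number of in-degree-$0$ vertices strictly inside $\sigma$ equals the number of in-degree-$2$ vertices in the region. The whole point of Lemma~\ref{lem:conservation}, as opposed to Lemma~\ref{lem:conservation_reduced}, is the conversion of this vertex identity into the cell identity $s=t$ when source and trap cells may be contiguous, and that conversion is precisely what your third paragraph does not supply --- as you acknowledge yourself (``the main obstacle is precisely this last reconciliation''). Moreover, the attribution you sketch is already false as stated: in the initial model a source cell need not have four ``outgoing-only'' corners. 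For instance, take the source cell at $(x+\frac12,y+\frac12)$ with $(U_y,U_{y+1},V_x,V_{x+1})=(+,-,-,+)$ and suppose in addition $U_{y-1}=+$ and $V_{x-1}=+$ (an alternating vertical pattern, excluded in the reduced model but allowed here). Then the edge $(x-1,y-1)\to(x,y)$ enters the source corner $(x,y)$ through the adjacent crossing cell, so $(x,y)$ is a mere through-vertex (one ``start'' is lost), while in the same configuration the cell $(x-\frac12,y+\frac12)$ is a trap having $(x,y)$ as an outward corner with in-degree $1$ instead of $2$ (one ``merge'' is lost as well). The per-cell contributions are thus configuration-dependent and interfere exactly at shared corners; also, the boundary term does not ``vanish'': merges into $\sigma$ from inside are matched with enclosed traps whose outward corner lies on $\sigma$, they are not cancelled.

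The paper resolves this difficulty by a local decoration of the graph (Figure~\ref{fig:modif}): four new start-vertices are attached to the four corners of every source cell, and the two crossing edges of every trap are replaced by two noncrossing arcs terminating at two new sink vertices. After this surgery every source contributes exactly four path-starts and every trap exactly two path-ends and two merges, \emph{irrespective} of adjacencies (in the example above, the added start at $(x,y)$ restores both the missing start and the missing merge), the only exceptional extra merges being classified by enumerating all line configurations around a trap (Figure~\ref{fig:cases}); the counting of Lemma~\ref{lem:conservation_reduced} then applies verbatim. Without this decoration, or an equivalent corrected cell-local count, your identity between in-degree-$0$ and in-degree-$2$ vertices cannot be turned into $s=t$, so the proposal is incomplete at its decisive step.
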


\begin{proof}
In order to adapt the proof of the previous lemma to the general case, we shall first introduce a decoration of the graph that keeps planarity and removes particular cases. At each source, we add four vertices marking starts of paths, each connected to one of the four corners, cf.~Figure~\ref{fig:modif}; and at each trap, we replace the two diagonal paths by two noncrossing paths ending in two new vertices called sinks.

\begin{figure}[h]
	\includegraphics[width=6cm]{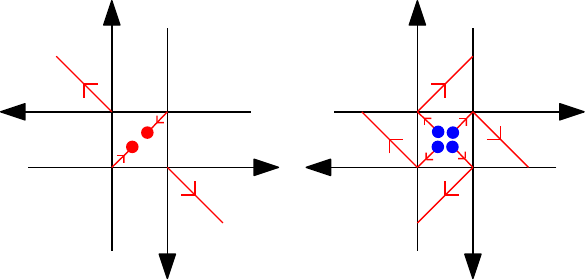}
	\caption{Local modification of paths, in the proof of Lemma~\ref{lem:conservation}: case of a trap (left) and of a source (right)}\label{fig:modif}
\end{figure}

With these modifications, each source always ``creates 4 paths'', in that it contains 4 starts of paths. And each trap always terminates 2 paths at the sinks and produces 2 merges at the other corners, as can be checked in each case on Figure~\ref{fig:cases}. Other merges may also happen at the other two corners of a trap, but only when they are also a merging corner of another trap. Note at last that merges only happen at corners of traps (which may at the same time be corners of sources). Hence, we can apply a counting argument similar to one used in Lemma~\ref{lem:conservation_reduced}. 


\begin{figure}[h]
	\includegraphics[width=11cm,page=1]{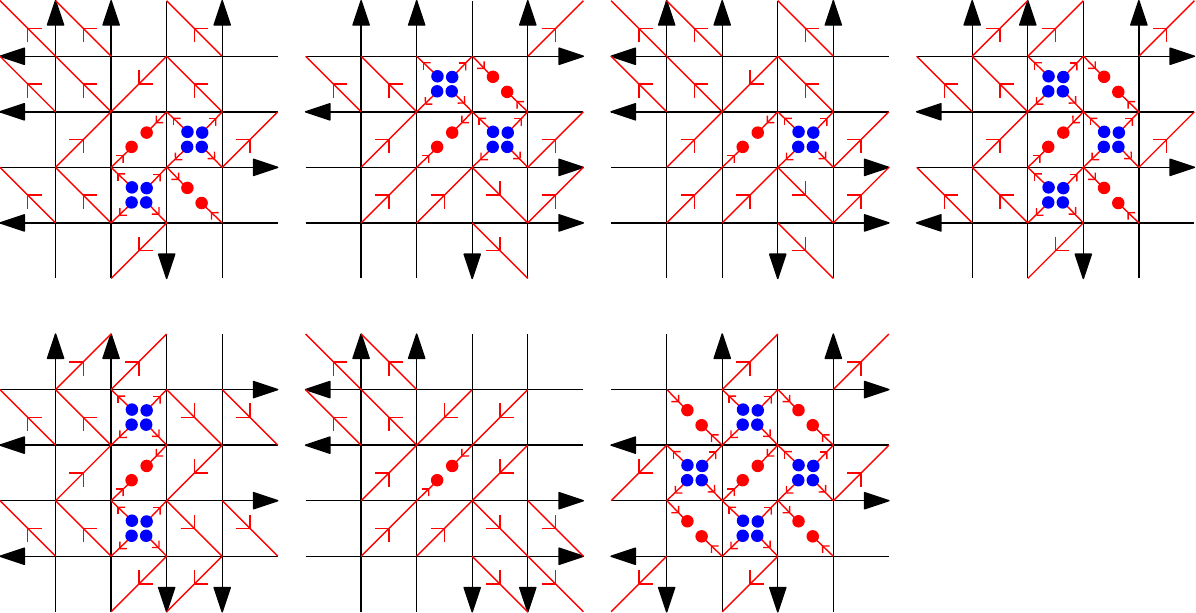}\\
	\caption{List of all configurations of lines surrounding a trap, up to symmetries. Note that in each case, of the four corners around a trap, two lead into the trap and the other two are merging points; the first two corners may also be merging points (in the 2nd, 4th and 7th cases), but only when next to another trap. }\label{fig:cases}
\end{figure}

\end{proof}

\section{Finiteness of paths}\label{sec:bounded}

The following proposition is based on a simple renewal argument. The proof builds on the next three lemmas.

\begin{proposition}\label{prop:bounded}
In any dimension, almost surely, the path starting at vertex $o$ is bounded.
\end{proposition}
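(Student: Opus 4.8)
The plan is to use a renewal/regeneration argument exploiting the independence and translation invariance of the environment, together with the ergodic/Borel--Cantelli principle that an event of positive probability that can be ``restarted'' infinitely often along the path must eventually occur. First I would identify a local configuration of orientations around a vertex---call it a \emph{trapping pattern}---whose presence near the current location of the walk forces the walk to enter a trap within a bounded number of steps (for instance, a $2\times 2$ block of vertices whose four surrounding line orientations are such that the walk, upon reaching any of them, is routed into a $2$-cycle). By the symmetric i.i.d.\ assumption such a pattern has a fixed positive probability $p>0$ of occurring on any prescribed finite set of lines. The key point to make precise is that the walk, as long as it has not been trapped, keeps exploring genuinely new regions of the lattice often enough that it gets fresh, independent chances to meet a trapping pattern.

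The key steps, in order: (1) Make the notion of \emph{renewal times} precise. Since at each step the walk moves diagonally by $(\pm1,\pm1)$, after $k$ steps it lies within $\ell^\infty$-distance $k$ of the origin, but more importantly the pair $(X_n,Y_n)$ evolves so that, unless the walk is cycling, it must periodically reach a vertex whose defining line orientations $U_{Y_n}, V_{X_n}$ have not yet been consulted by the walk. I would show that along any infinite (non-eventually-periodic) trajectory there is an infinite increasing sequence $\tau_1<\tau_2<\cdots$ of times at which $Z_{\tau_k}$ sits on a pair of lines disjoint from all lines used before time $\tau_k$. (2) At each such time, condition on the past: the orientations of the fresh lines in a bounded neighbourhood are still i.i.d.\ and symmetric, so independently with probability $\ge p>0$ they form a trapping pattern and the walk is absorbed within a bounded number of further steps. (3) Conclude by Borel--Cantelli (second, using conditional independence across the $\tau_k$): almost surely the walk meets a trapping pattern at some $\tau_k$, hence gets trapped; equivalently the event ``$Z$ is unbounded'' has probability $0$. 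This argument is dimension-independent, as the statement requires, since in every dimension ``fresh lines appear infinitely often'' and a local trapping (or at least periodicity-forcing) configuration has positive probability---indeed in higher dimensions one uses the weaker conclusion that the pattern forces eventual periodicity rather than a $2$-cycle, which is enough for boundedness.

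The main obstacle I expect is step (1): controlling that an infinite non-periodic path really does encounter new lines infinitely often, and in a way that gives a clean conditional-independence structure. A priori the walk could oscillate for a very long time over an already-explored set of lines before moving on, and one must rule out that it stays forever within a bounded strip without cycling---which would contradict finiteness of the number of vertices in a bounded strip, so in fact a non-periodic path must have $|X_n|\to\infty$ or $|Y_n|\to\infty$, forcing new lines; but turning ``new lines appear'' into ``new lines appear together, as an unexplored $2\times2$ block, infinitely often with controlled conditioning'' requires care in the choice of the stopping times $\tau_k$ and of the neighbourhood whose orientations we reveal. The three lemmas announced after the proposition presumably handle exactly this bookkeeping (identifying the renewal structure, the positive probability of the absorbing pattern, and the summation), so I would organize the proof so that each lemma isolates one of these three ingredients and the proposition follows by combining them via Borel--Cantelli.
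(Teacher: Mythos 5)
Your high-level strategy (renewal times with fresh randomness, a positive-probability trapping event at each, then Borel--Cantelli/geometric iteration) is the same in spirit as the paper's, but the step you yourself flag as the main obstacle --- step (1) --- is a genuine gap, and it is not a bookkeeping issue: as you state it, the claim is false. You ask for infinitely many times at which the walk sits on a \emph{pair} of lines (indeed an unexplored $2\times2$ block) disjoint from all lines used before. A path can be infinite and non-periodic while, after finitely many steps, never again meeting a fresh horizontal line: for instance, if the rows it rides all point east and the columns it successively discovers alternate up/down, the walk moves east forever oscillating between two rows, so only new \emph{columns} are ever fresh. Such environments have probability zero, but that is precisely what has to be \emph{proved}, so the deterministic ``fresh $2\times2$ block infinitely often along any unbounded path'' cannot serve as a stepping stone; ruling out the ``riding an already-explored stripe'' scenario is the heart of the matter, not a technicality to be delegated to the lemmas.

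The paper's proof resolves exactly this point by weakening what must be fresh. Its renewal time $T$ is the first time the walk enters a \emph{new vertical line} while the (possibly already known) horizontal line at its current height points backward ($U_{Y_T}=-1$); at such a time only the single fresh orientation $V_{X_T}$ needs to cooperate, and it creates a trap at $Z_T$ with conditional probability exactly $1/2$ given $\mathcal F_T$ (one fresh coin, not four). Iterating gives $\P(T_{n+1}<\infty)\le\tfrac12\P(T_n<\infty)$, so a.s.\ only finitely many such times occur. The second, equally essential ingredient --- which your plan lacks --- is the argument for what happens after the last such time: if the path then exceeded its rightmost abscissa, every newly discovered column would sit on an eastward row, so the walk would march east through fresh columns and its vertical displacement would be a symmetric simple random walk; being a.s.\ unbounded in both directions, this walk would hit a row with $U_y=-1$ (such rows exist a.s.), producing another renewal time --- a contradiction. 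Hence the path is bounded eastward, and by symmetry in all four directions (and the analogous argument with probability $2^{-(d-1)}$ per renewal in dimension $d$). To repair your proposal you would need both modifications: redefine the renewal times so that only one fresh line is required (conditioning on the known backward-pointing transverse line), and replace your assertion that renewals recur forever along an unbounded path by the transverse-random-walk contradiction showing that if renewals stop, the path is bounded in that direction.
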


Remember $X_0=Y_0=0$. Let $T=\inf\{n\geq0:X_n\not\in[\min_{k\leq n-1}X_k,\max_{k\leq n-1}X_k]\mbox{ and }U_{Y_n}=-1\}$ be the first time the path meets a new vertical line and a horizontal line that points to the left (with the understanding that $\min_{k\leq -1}X_k=+\infty$ and $\max_{k\leq -1}X_k=-\infty$).

\begin{lemma}
On $\{T<+\infty\}$, $X_T\geq0$, and on $\{1\leq T<+\infty\}$, $X_{T-1}=\max_{k\leq T-1}X_k$ and $X_T=X_{T-1}+1$.
Furthermore, $T=\inf\{n\geq0:X_{n+1}=X_n-1\}$.
\end{lemma}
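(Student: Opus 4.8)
The statement concerns the first time $T$ at which the deterministic path $Z$ simultaneously (i) reaches a vertical line strictly to the right of all previously visited ones or strictly to the left of all previously visited ones --- note that because $X_0 = 0$ and steps change $X$ by $\pm 1$, the visited set $\{X_k : k \le n-1\}$ is always an interval containing $0$, so ``$X_n \notin [\min_{k\le n-1}X_k,\max_{k\le n-1}X_k]$'' means $X_n$ is a new extreme value --- and (ii) the horizontal line through $Z_n$ points left, i.e.\ $U_{Y_n} = -1$, which is precisely the condition $X_{n+1} = X_n - 1$. The first thing I would do is unwind these definitions and record the elementary observation that the range of $X$ up to time $n-1$ is an integer interval $[m_{n-1}, M_{n-1}]$ with $m_{n-1} \le 0 \le M_{n-1}$, and that $X_n$ lies outside it exactly when $X_n = M_{n-1}+1$ or $X_n = m_{n-1}-1$; in either case $\{X_k:k\le n\}$ is the interval with one endpoint extended by one.

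\textbf{Key steps.} First I would prove the last sentence, $T = \inf\{n \ge 0 : X_{n+1} = X_n - 1\}$, since it is the cleanest and it drives the rest. Call the right-hand side $T'$. The inequality $T \ge T'$ is immediate: at time $T$ we have $U_{Y_T} = -1$, so $X_{T+1} = X_T + U_{Y_T} = X_T - 1$, hence $T' \le T$. For the reverse, I claim that at time $T'$ the coordinate $X_{T'}$ is necessarily a new rightmost value. Indeed, the crucial structural fact is that the horizontal line at level $y$ has a fixed orientation $U_y$; so once the path, while at its current running maximum of the $X$-coordinate, is about to take a step that decreases $X$, it must be the first time it is on that particular line pointing left \emph{while at a record}. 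More carefully: suppose $n < T'$, so $X_{k+1} = X_k + 1$ fails never before $n$... this needs the right phrasing. The honest version: by minimality of $T'$, for every $k < T'$ we have $X_{k+1} = X_k + 1$, so $X$ is strictly increasing on $\{0,1,\dots,T'\}$; therefore $X_{T'} = T'$ and in particular $X_{T'-1} = X_{T'} - 1 = \max_{k \le T'-1} X_k$ and $X_{T'} = X_{T'-1}+1 > \max_{k\le T'-1}X_k$, so $X_{T'} \notin [\min_{k\le T'-1}X_k, \max_{k \le T'-1}X_k]$; combined with $U_{Y_{T'}} = -1$ (which is what $X_{T'+1} = X_{T'}-1$ says) this gives $T \le T'$. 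Hence $T = T'$, and moreover this argument has already produced, on $\{1 \le T < +\infty\}$, the identities $X_{T-1} = \max_{k\le T-1}X_k$ and $X_T = X_{T-1} + 1$, as well as $X_T = T \ge 1 \ge 0$. Wait --- I should double check that $X$ is strictly increasing up to $T'$ forces $X_T \ge 0$: indeed $X_T = T \ge 0$, and if $T \ge 1$ then $X_T \ge 1$; if $T = 0$ then $X_0 = 0 \ge 0$. So $X_T \ge 0$ on all of $\{T < +\infty\}$, which is the remaining claim.

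\textbf{Main obstacle.} The only subtle point is making sure the definition of $T$ is not accidentally triggered by $X_n$ becoming a new \emph{leftmost} value with $U_{Y_n} = -1$ before it ever becomes a new rightmost value --- the displayed argument shows this cannot happen because, before $T'$, the path's $X$-coordinate is strictly increasing, so it never decreases at all and a fortiori never hits a new minimum. The one-line slogan is: \emph{the first step that ever decreases $X$ is necessarily taken from the current all-time maximum of $X$}, which is obvious once we note $X$ moves by unit steps starting from $0$ and has not yet decreased. I do not anticipate needing the i.i.d.\ or even the symmetry assumptions here; the lemma is a purely deterministic statement about the trajectory, and the proof is essentially the paragraph above. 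If anything, care is needed only with the boundary conventions ($\min_{k\le -1} X_k = +\infty$, $\max_{k\le -1}X_k = -\infty$) to see that the case $T = 0$ cannot occur (since $X_0 = 0$ lies in no such degenerate interval — actually $0 \notin [+\infty, -\infty] = \varnothing$, so the first condition at $n=0$ \emph{is} satisfied; then $T = 0$ iff $U_{Y_0} = U_0 = -1$, consistent with $T' = 0$ iff $X_1 = X_0 - 1$ iff $U_0 = -1$), so the conventions are exactly what make the two descriptions agree at $n = 0$ as well.
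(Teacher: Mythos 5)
Your proof is correct, and it rests on exactly the same observation as the paper's: by minimality, every step before $T$ must increase $X$, so $X_k=k$ up to $T$, which is what forces $X_T\ge 0$, makes $X_T$ a running maximum, and identifies $T$ with $\inf\{n\ge 0: X_{n+1}=X_n-1\}$. The only difference is organizational: you prove the identity $T=\inf\{n: X_{n+1}=X_n-1\}$ directly (via the two inequalities) and read off the remaining claims, whereas the paper argues by contradiction (assuming $T<\infty$ and $X_T<0$, then showing $U_{Y_n}=+1$ and $X_n=n$ for all $n$, hence $T=\infty$) and only writes out the proof of $X_T\ge 0$, leaving the other assertions implicit; your direct version is, if anything, slightly more complete, and your handling of the $T=0$ boundary convention is consistent with the paper's.
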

\begin{proof}
Suppose $T<+\infty$ and $X_T<0$. Then $U_{Y_0}=U_0=+1$. Indeed, if $U_{Y_0}=-1$, then $T=0$ and $X_T=0$, contradicting the assumption $X_T<0$. It follows that $X_1=1$. Again, if $U_{Y_1}=-1$, then $T=1$ ($X_1\not\in[\min_{k\leq0}X_k,\max_{k\leq0}X_k]=\{0\}$) and $X_T=1$, again contradicting the assumption $X_T<0$. It follows that $U_{Y_1}=+1$ and $X_2=2$. This argument can be repeated to show that $\forall n$, $U_{Y_n}=+1$ and $X_n=n$; that is $\tau=+\infty$.
\end{proof}

\begin{lemma}
$\mathbb{P}(V_{X_T}=+1,T<+\infty)=1/2$ and more generally, for any $A\in\mathcal{F}_T$, $\mathbb{P}(V_{X_T}=+1,A\cap\{T<+\infty\})=1/2\mathbb{P}(A\cap\{T<+\infty\})$; that is
$$\mathbb{P}(V_{X_T}=+1|\mathcal{F}_T\cap\{T<+\infty\})=
\mathbb{P}(V_{X_T}=-1|\mathcal{F}_T\cap\{T<+\infty\})=1/2.$$
In particular $V_{X_T}1_{T<+\infty}$ is independent of $\mathcal{F}_T\cap\{T<+\infty\}$.
\end{lemma}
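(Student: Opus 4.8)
The statement to prove is that $V_{X_T}\indic_{T<+\infty}$ is conditionally uniform on $\{-1,+1\}$ given $\Fr_T$ restricted to $\{T<+\infty\}$, equivalently that for every $A\in\Fr_T$ one has $\P(V_{X_T}=+1,\ A\cap\{T<+\infty\})=\tfrac12\P(A\cap\{T<+\infty\})$. The key observation, supplied by the previous lemma, is that on $\{1\le T<+\infty\}$ we have $X_T=X_{T-1}+1=\max_{k\le T-1}X_k+1$, so the vertical line indexed by $X_T$ is a \emph{genuinely new} line: none of $X_0,\dots,X_{T-1}$ ever equals $X_T$, because $X_T$ strictly exceeds $\max_{k\le T-1}X_k$. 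Consequently the event $\{T\le n\}$ and the values $Z_0,\dots,Z_n$ on $\{T\le n\}$ are measurable with respect to the orientations $U$ together with $(V_x)_{x\ne X_T}$, i.e.\ the variable $V_{X_T}$ has not been ``looked at'' yet when $T$ occurs. (On $\{T=0\}$ we have $X_T=0$ and the path has used no $V$-line at all, so the same is true.)

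First I would make this precise. The natural filtration here is $\Fr_n=\sigma(Z_0,\dots,Z_n)=\sigma(U,V\ \text{restricted to lines already visited})$; more usefully, introduce for each $x$ the ``stopping line'' $\tau_x=\inf\{n\ge0:X_n=x\}$ and note that $Z_{0},\dots,Z_n$ is determined by $U$ and by $(V_y)_{y\in\{X_0,\dots,X_{n-1}\}}$. On $\{T<+\infty\}$, set $j=X_T$; the previous lemma gives $j\ge0$ and $j\notin\{X_0,\dots,X_{T-1}\}$. Therefore, decomposing over the (at most countably many) possible values of $T$ and of the path up to time $T$, the event $A\cap\{T=m\}$ for $A\in\Fr_T$ is, for each fixed trajectory $(z_0,\dots,z_m)$ with $z_m=(j,\cdot)$, an event measurable with respect to $\Gr_j:=\sigma\big(U,(V_x)_{x\ne j}\big)$. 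Since $V_j$ is independent of $\Gr_j$ and symmetric, conditioning on $\Gr_j$ gives $\P(V_j=+1\mid\Gr_j)=\tfrac12$, and multiplying by the indicator of $A\cap\{T=m\}\in\Gr_j$ and summing over $m$ (and over the value $j=X_T$, which is itself $\Fr_T$-measurable) yields the claim. The independence assertion is then just the reformulation that $\P(V_{X_T}=\pm1\mid \Fr_T)=\tfrac12$ on $\{T<+\infty\}$.

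The one genuine subtlety — and the step I expect to need the most care — is the measurability bookkeeping: one must check that $T$ is indeed a stopping time for $(\Fr_n)$ and, more importantly, that the whole trajectory up to and including time $T$ can be reconstructed \emph{without} reading $V_{X_T}$. This hinges precisely on $X_T>\max_{k\le T-1}X_k$: the step from $Z_{T-1}$ to $Z_T$ uses $V_{X_{T-1}}$ (a previously-visited line) and $U_{Y_{T-1}}$, not $V_{X_T}$; the value $V_{X_T}$ only enters at the \emph{next} step, when the path is at $(X_T,Y_T)$. Thus the alternative characterization $T=\inf\{n:X_{n+1}=X_n-1\}$ from the previous lemma is doing real work: it shows that up to time $T$ the horizontal coordinate has been non-decreasing, $X_k=\max_{j\le k}X_j$ for all $k\le T$, so each $V$-line used so far has index strictly below $X_T$. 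Once this is nailed down, the probabilistic content is the elementary fact that a symmetric $\{-1,+1\}$ variable, conditioned on an independent $\sigma$-algebra, is still fair. I would also remark that the same argument shows more: $V_{X_T}\indic_{T<+\infty}$ is independent of $\Fr_T$ \emph{and} of the orientations of all lines to the left of $X_T$, which is what will be needed for the renewal argument that follows.
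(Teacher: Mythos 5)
Your proposal is correct and follows essentially the same route as the paper: the paper's proof likewise rests on the observation that, by the previous lemma, $\{X_T=x,\,T<+\infty\}$ (together with the whole history up to time $T$) is measurable with respect to $\sigma\big((U_y)_{y\in\Z},(V_a)_{a<x}\big)$ and hence independent of the symmetric variable $V_x$, then sums over $x$. Your write-up merely spells out the measurability bookkeeping (decomposition over $T=m$ and over trajectories) that the paper states in two terse observations, so no substantive difference.
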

\begin{proof}
The proof follows from the following observations.

First, $Z_n$ is $\sigma((U_y)_{|y|\leq n-1},(V_x)_{|x|\leq n-1})$-measurable. It follows that $U_{Y_n}$ is $\sigma((U_y)_{|y|\leq n},(V_x)_{|x|\leq n-1})$-measurable, and $V_{X_n}$ is $\sigma((U_y)_{|y|\leq n-1},(V_x)_{|x|\leq n})$-measurable, and that  $(U_{Y_n},V_{X_n})$ is $\sigma((U_y)_{|y|\leq n},(V_x)_{|x|\leq n})$-measurable.

Then, for $x\geq0$, $\{X_T=x,T<+\infty\}\in\sigma((U_y)_{y\in\mathbb{Z}},(V_a)_{a<x})$ and is independent of $V_x$.
\end{proof}

For any $v\in\mathbb{Z}^2$, we write $v\in\IV$ when $v$ is an inward trap vertex.

\begin{lemma}\label{lem:trap}
$\mathbb{P}(Z_T\in\IV|\mathcal{F}_T\cap\{T<+\infty\})=1/2$.
\end{lemma}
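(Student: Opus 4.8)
The plan is to reduce the event $\{Z_T\in\IV\}$ to a single ``fresh'' coin flip and then invoke the preceding lemma.

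\emph{Geometric reformulation.} First I would establish the deterministic identity, valid on $\{T<\infty\}$,
\[\{Z_T\in\IV\}=\{Z_{T+2}=Z_T\},\]
i.e.\ $Z_T$ is an inward trap vertex exactly when the path is already trapped on the edge $\{Z_T,Z_{T+1}\}$ from time $T$ on. Indeed, $\omega$ has a unique outgoing edge at $Z_T$, namely $(Z_T,Z_{T+1})$; so if $Z_T$ is one of the two endpoints of the crossing edges of some trap cell, those two opposite edges must be $(Z_T,Z_{T+1})$ and $(Z_{T+1},Z_T)$, and $(Z_{T+1},Z_T)\in\omega$ says precisely that the outgoing edge of $Z_{T+1}$ leads back to $Z_T$, i.e.\ $Z_{T+2}=Z_T$; the converse is immediate.

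\emph{Translating into orientations.} Writing $Z_{n+1}=Z_n+(U_{Y_n},V_{X_n})$ at $n=T$ and $n=T+1$, one sees that $Z_{T+2}=Z_T$ if and only if $(U_{Y_{T+1}},V_{X_{T+1}})=-(U_{Y_T},V_{X_T})$. Since $U_{Y_T}=-1$ by definition of $T$, this forces $X_{T+1}=X_T-1$; and on $\{1\le T<\infty\}$ the first lemma of this section gives $X_{T-1}=X_T-1$, whence $U_{Y_{T-1}}=X_T-X_{T-1}=+1$. So, on $\{1\le T<\infty\}$, the condition reads $U_{Y_{T+1}}=+1$ together with $V_{X_T}=-V_{X_{T-1}}$ (using $X_{T+1}=X_{T-1}$). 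The key point is that the first of these is automatic given the second: from $Y_T=Y_{T-1}+V_{X_{T-1}}$ and $V_{X_T}=-V_{X_{T-1}}$ we get $Y_{T+1}=Y_T+V_{X_T}=Y_{T-1}$, hence $U_{Y_{T+1}}=U_{Y_{T-1}}=+1$. Therefore $\{Z_T\in\IV\}=\{V_{X_T}=-V_{X_{T-1}}\}$ on $\{1\le T<\infty\}$.

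\emph{Conclusion.} On $\{1\le T<\infty\}$ the value $V_{X_{T-1}}=Y_T-Y_{T-1}$ is $\mathcal{F}_T$-measurable, while the preceding lemma says $V_{X_T}\mathbf{1}_{\{T<\infty\}}$ is uniform on $\{-1,+1\}$ and independent of $\mathcal{F}_T\cap\{T<\infty\}$; hence $\P\big(V_{X_T}=-V_{X_{T-1}}\mid\mathcal{F}_T\cap\{1\le T<\infty\}\big)=\tfrac12$, which is the claim on $\{T\ge1\}$. On $\{T=0\}$ one has $Z_1=(-1,V_0)$ and, by the same reformulation, $Z_0\in\IV$ iff $U_{V_0}=+1$ and $V_{-1}=-V_0$; this boundary case is checked directly. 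The expected main obstacle is precisely the collapse used above: a priori $\{Z_{T+2}=Z_T\}$ reads as the intersection of one constraint on a $U$-value and one on a $V$-value, which would only give conditional probability $\tfrac14$; what rescues it is the special geometry of the stopping time $T$ — that $Z_T$ is reached by a \emph{rightward} step from the current running maximum of $X$, forcing $U_{Y_{T-1}}=+1$ — which makes the $U$-constraint redundant. A secondary, purely bookkeeping issue is matching the $\sigma$-field: that $V_{X_{T-1}}$ is $\mathcal{F}_T$-measurable and that the independence of $V_{X_T}$ from the preceding lemma is applied with the right conditioning event.
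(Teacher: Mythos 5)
Your argument on $\{1\le T<\infty\}$ is correct and is essentially the paper's own proof, written out in more detail: the identity $\{Z_T\in\IV\}=\{Z_{T+2}=Z_T\}$ (via uniqueness of the outgoing edge) and the resulting equivalence $\{Z_T\in\IV\}=\{V_{X_T}=-V_{X_{T-1}}\}$ is exactly the characterization the paper invokes ($Y_{T-1}=Y_T+1$ and $V_{X_T}=+1$, or $Y_{T-1}=Y_T-1$ and $V_{X_T}=-1$), and the key point you isolate --- that the constraint $U_{Y_{T+1}}=+1$ is automatic because the step into $Z_T$ was eastward, so $Y_{T+1}=Y_{T-1}$ and $U_{Y_{T-1}}=+1$ --- is precisely what makes the event a single fresh $V$-coin event, to which the preceding lemma applies.

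The one genuine flaw is your claim that the boundary case $\{T=0\}$ ``is checked directly''. It is not: there, no previous eastward step exists, so the $U$-constraint is \emph{not} redundant, and, as you yourself write, $Z_0\in\IV$ iff $U_{V_0}=+1$ \emph{and} $V_{-1}=-V_0$. Since $\{T=0\}=\{U_0=-1\}$ is independent of $(U_{\pm1},V_0,V_{-1})$, one gets
\[\P(Z_T\in\IV,\,T=0)=\P(U_0=-1)\,\P(U_{V_0}=+1,\,V_{-1}=-V_0)=\tfrac12\cdot\tfrac14=\tfrac18\neq\tfrac12\,\P(T=0),\]
so by the tower property the conditional probability cannot be a.s.\ equal to $1/2$ on $\{T=0\}$: the direct check refutes the boundary case rather than confirming it. In fairness, this is an imprecision already present in the paper, whose stated characterization uses $Y_{T-1}$ and thus tacitly assumes $T\ge1$; and it is harmless for the way the lemma is used in the proof of Proposition~\ref{prop:bounded}, where a uniform positive lower bound suffices (and where all renewal times after the first are $\ge1$, so the $T\ge1$ case is the only one really needed). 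The correct fix is to restrict the statement (and your conclusion) to $\{1\le T<\infty\}$, or to weaken the equality to a lower bound on $\{T=0\}$, rather than to assert that the $T=0$ case also yields $1/2$.
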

\begin{proof}
This is an immediate application of the above lemma and the fact that $Z_T$ is a trap if and only if either $Y_{T-1}=Y_T+1$ and $V_{X_T}=+1$, or $Y_{T-1}=Y_T-1$ and $V_{X_T}=-1$.
\end{proof}

\subsubsection*{Proof of Proposition \ref{prop:bounded}}

We write the proof for $d=2$ and then explain the (simple) adaptation to higher dimensions.

Fix the horizontal environment such that at least one arrow points to the left and one arrow points to the right (i.e. not all arrows point in the same direction).

Let $T_n$ be the consecutive times the path meets a new vertical line (to the right of $o$) and a horizontal line that points to the left. Set $T_n=+\infty$ if fewer than $n$ such occurrences exist, and $T_0=0$ ($T_1=T$).

It follows from Lemma \ref{lem:trap} that,
$$\P(T_{n+1}=+\infty|{\mathcal{F}}_{T_n},T_n<+\infty) \geq \P(Z_{T_n}\in\IV|{\mathcal{F}}_{T_n},T_n<+\infty) = 1/2.$$
It follows that $\P(T_{n+1}=+\infty,T_n<+\infty)\geq (1/2)\P(T_n<+\infty)$, that $\P(T_{n+1}<+\infty)\leq (1/2)\P(T_n<+\infty)$ and that
$$\P(T_{n+1}<+\infty)\leq (1/2)^n\P(T_1<+\infty).$$
By virtue of the Borel-Cantelli lemma, we conclude that $\P(T_n<+\infty\mbox{ i.o.})=0$ and that $\P(T_n=+\infty\mbox{ i.o.})=1$ (recall that the sequence $T_n$ is increasing).

Now, on the event $\{T_{n-1}<+\infty,T_n=+\infty,X_{T_{n-1}}=x\}$, either $\sup_kX_k\leq x$ and the path is bounded on the right, or $\exists k$ such that $X_k=x+1$. In this case, $U_{x+1}=+1$ (otherwise $T_n<+\infty$) and $X_{k+1}=x+2$. In turn, this implies that $U_{x+2}=+1$ and so on. We end up with a setting where all horizontal lines met after $T_{n-1}$ are pointing to the right, and $Z_{k+\ell}=(X_k+\ell,Y_k+W_\ell)$, where $W_\ell=V_{x+1}+\ldots+V_{x+\ell}$ is a simple symmetric random walk. This is not possible as sooner or later the path must meet a horizontal line that points to the left, proving that such a $k$ cannot exist and that the path is indeed bounded.

The principle of this proof adapts in any dimension: for the same reason, in any direction where the path is unbounded, there must be infinitely many hyperplanes that are entered for the first time at a place where the orthogonal direction is pointing backward, and each time, with probability $1/2^{d-1}$, the other orientations (which are newly discovered) push the path to its previous location, thereby creating a trap. Due to the independence, we again conclude that almost surely this happens eventually.
\hfill$\Box$\bigskip

The non-existence of paths coming from infinity will now follow from a simple application of the mass transportation principle (cf.~\cite{LyonsPeres}) that we first remind:

\begin{lemma}[Mass transport principle]
Let $f:(u,v,\omega)\in\Z^2\times\Z^2\times\Omega\mapsto f(u,v,\omega)\in\R_+$ be a measurable function. We shall henceforth view $\omega\mapsto f(u,v,\omega)$ as random variables and drop $\omega$ from the notation. Assume that $\E[f(u,v)]=\E[f(u+w,v+w)]$ for all $u,v,w\in\Z^2$. 
Then for any vertex $o\in\Z^2$,
\[\E\bigg[\sum_{u\in\Z^2} f(o,u)\bigg]=\E\bigg[\sum_{u\in\Z^2} f(u,o)\bigg].\]
\end{lemma}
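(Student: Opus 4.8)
The plan is the standard mass-transport argument: use nonnegativity to interchange expectation and summation, then use the diagonal invariance to recognize that $\E[f(u,v)]$ depends on $(u,v)$ only through $v-u$, and finally observe that both sides collapse to the same sum over $\Z^2$.

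First I would invoke Tonelli's theorem. Since $\omega\mapsto f(u,v,\omega)$ is measurable and nonnegative for every $u,v$, and $\P$ is a probability (hence $\sigma$-finite) measure while $\Z^2$ carries counting measure (also $\sigma$-finite), we may interchange $\E$ and $\sum$:
\[\E\Big[\sum_{u\in\Z^2}f(o,u)\Big]=\sum_{u\in\Z^2}\E[f(o,u)],\qquad\E\Big[\sum_{u\in\Z^2}f(u,o)\Big]=\sum_{u\in\Z^2}\E[f(u,o)],\]
both identities holding in $[0,+\infty]$.

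Next I would exploit the hypothesis $\E[f(u,v)]=\E[f(u+w,v+w)]$: taking $w=-u$ gives $\E[f(u,v)]=\E[f(0,v-u)]$, so that, setting $g(k)\defeq\E[f(0,k)]$ for $k\in\Z^2$, we have $\E[f(u,v)]=g(v-u)$ for all $u,v\in\Z^2$. Consequently
\[\sum_{u\in\Z^2}\E[f(o,u)]=\sum_{u\in\Z^2}g(u-o)=\sum_{k\in\Z^2}g(k),\]
by the change of summation index $k=u-o$, a bijection of $\Z^2$ onto itself, and likewise
\[\sum_{u\in\Z^2}\E[f(u,o)]=\sum_{u\in\Z^2}g(o-u)=\sum_{k\in\Z^2}g(-k)=\sum_{k\in\Z^2}g(k),\]
using the same substitution followed by the relabelling $k\mapsto-k$ of the index ranging over all of $\Z^2$. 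Comparing, both sides of the asserted identity equal $\sum_{k\in\Z^2}g(k)$, which proves the lemma.

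I do not expect a genuine obstacle here; the only points needing a word of care are that the interchange of $\E$ and $\sum$ relies on $f\ge0$, so that it stays valid (in $[0,+\infty]$) even when the sums diverge, and that the two reindexings are true bijections of $\Z^2$, so no issue of conditional convergence arises.
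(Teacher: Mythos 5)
Your proof is correct and follows essentially the same route as the paper's: the diagonal-invariance hypothesis is used to reduce each term to a translate (the paper shifts $f(o,u)$ to $f(2o-u,o)$ with $w=o-u$; you reduce to $g(v-u)=\E[f(0,v-u)]$) and then a bijective reindexing of $\Z^2$ identifies the two sums, with Tonelli (which the paper leaves implicit) justifying the interchange of $\E$ and $\sum$ thanks to $f\ge 0$. No gaps; your version is just slightly more explicit about the measure-theoretic step.
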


\begin{proof}
The assumption yields
\[\sum_{u\in\Z^2}\E[f(o,u)]=\sum_{u\in\Z^2}\E[f(2o-u,o)],\]
and the second sum is equal, up to reindexing to $\sum_{u\in\Z^2} \E[f(u,o)]$.
\end{proof}

Recall, for $a,b\in\Z^2$, we write $a\to b$ in $\omega$ if there is a path $a=a_0,a_1,\ldots,a_n=b$ in $\omega$ and $a_1,\ldots,a_{n-1}\notin\IV$, i.e.\ the path doesn't cross a trap (recall that $\IV$ is the set of vertices of trivial traps).

For every $o\in\Z^2$, if $o\to\infty$, let us denote $\tau(o)=\infty$ and otherwise denote by $\tau(o)$ the only $v\in\IV$ such that $o\to v$, i.e.\ $\tau(o)$ is the trap where the path starting at $o$ ends.

Let us also denote by $C(o)$ the connected component of $o$ in $\omega$:
\[C(o)=\{u\in\Z^2\st u\to o\text{ or }o\to u\}.\]
Note that, from the definition of $\to$, the set $C(o)$ contains at most one vertex in $\IV$.

\begin{proposition}\label{prop:from_infinity}
Almost surely, $\omega$ contains no semi-infinite path ending at a trap vertex. Furthermore,
\[\E\cro{\abs{C(o)}\s o\in\IV}<\infty.\]
\end{proposition}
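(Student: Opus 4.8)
The plan is to use the mass transport principle with a well-chosen transport function, combined with the bound on $\P(T_n<\infty)$ already established in the proof of Proposition~\ref{prop:bounded}. First I would fix the horizontal environment $U$ with at least one left arrow and one right arrow (the excluded configurations have probability zero once we are in the reduced model, or can be handled separately), and work conditionally. Define the transport function $f(u,v)$ to be the indicator that $v\in\IV$ and $u\to v$ in $\omega$, i.e.\ $u$ lies in the tree rooted at the trap $v$. Then $\sum_{u}f(o,u)\le 1$ for every $o$ (each $o$ flows to at most one trap), so $\E[\sum_u f(o,u)]\le 1$; while $\sum_u f(u,o)=\abs{\{u: u\to o\}}$, which on the event $o\in\IV$ is exactly $\abs{C(o)}$ (recall $C(o)$ contains at most one trap vertex, and when $o$ itself is that vertex, $C(o)=\{u: u\to o\}$). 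Shift-invariance of $\E[f(u,v)]$ under the common translation $u,v\mapsto u+w,v+w$ is clear since $\omega$ is built from a shift-invariant environment. The mass transport principle then gives
\[
\E\bigl[\abs{C(o)}\,\indic_{\{o\in\IV\}}\bigr]=\E\Bigl[\sum_{u\in\Z^2}f(u,o)\Bigr]=\E\Bigl[\sum_{u\in\Z^2}f(o,u)\Bigr]\le 1<\infty,
\]
which, after dividing by $\P(o\in\IV)>0$, yields the stated conditional expectation bound. (Strictly one should check $\P(o\in\IV)>0$, which is immediate: e.g.\ a local configuration of the four lines around $o$ making $o$ an inward trap vertex has positive probability.)

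For the first assertion — that there is almost surely no semi-infinite path ending at a trap vertex — I would argue by contradiction using the same renewal estimate. If such a bi-infinite-in-the-past path existed ending at a trap, then in particular its tail toward the past is a semi-infinite path, hence unbounded in at least one of the two coordinate directions; say it visits infinitely many distinct vertical lines (the other case is symmetric, using the vertical environment). Reading the path in the direction of increasing distance from its trap endpoint, one encounters infinitely many ``first visits to a new vertical line'', and at each such first visit the horizontal orientation there is independent of the past of the path and equals $-1$ with probability... — here one must be slightly careful, since the path is being traversed backward relative to its natural orientation in $\omega$. The cleaner route is: fix the trap vertex $v$; the event $\{v\in\IV,\ \abs{C(v)}=\infty\}$ is contained in $\{v\in\IV\}\cap\{\text{some infinite path ends at }v\}$, and by the finite-expectation bound just proved, $\E[\abs{C(o)}\indic_{\{o\in\IV\}}]<\infty$ forces $\abs{C(o)}<\infty$ a.s.\ on $\{o\in\IV\}$; hence no infinite path ends at any fixed trap vertex a.s., and a countable union over $v\in\Z^2$ finishes it. So in fact the no-infinite-path statement is a \emph{consequence} of the mass transport bound rather than a separate argument, and I would present it in that order: first establish the inequality $\E[\sum_u f(o,u)]\le 1$ and apply mass transport to get $\E[\abs{C(o)}\indic_{\{o\in\IV\}}]\le 1/\P(o\in\IV)<\infty$, then deduce finiteness a.s.\ and hence the absence of semi-infinite paths into traps.

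One subtlety to flag: the definition of $\to$ forbids the intermediate vertices of the path from being inward trap vertices, so ``$u\to v$ with $v\in\IV$'' genuinely picks out the unique trap at the end of $u$'s forward trajectory and the sets $\{u:u\to v\}$ over $v\in\IV$ partition (together with the set of $u$ with $u\to\infty$, which is empty a.s.\ by Proposition~\ref{prop:bounded}) the whole lattice $\Z^2$ minus nothing — this is what makes $\sum_u f(o,u)\le 1$ true with no double counting. I would make this partition statement explicit before invoking mass transport.

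\textbf{Main obstacle.} The genuinely delicate point is not the mass transport computation, which is routine, but ensuring that the transport function is well-defined and nonnegative \emph{without} first knowing paths are bounded — i.e.\ handling the event $\{o\to\infty\}$. This is why the argument must come \emph{after} Proposition~\ref{prop:bounded}: on a full-measure event every forward trajectory terminates at a unique trap, so $\tau(o)$ is well-defined and $\sum_u f(o,u)=\indic_{\{\tau(o)\ne\infty\}}\le 1$. Once that is in hand the rest is bookkeeping. The only other thing to be careful about is that mass transport as stated requires $\E[f(u,v)]=\E[f(u+w,v+w)]$ for \emph{all} $u,v,w$, which holds here by stationarity of $(U,V)$ — but if one is working in the reduced model one should note that the reduced environment is stationary only up to a random shift, so it is cleanest to prove this proposition for the \emph{original} model directly (the statement does not mention reduction), where $(U,V)$ is genuinely i.i.d.\ hence stationary.
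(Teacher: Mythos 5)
Your proposal is correct and is essentially the paper's own argument: the same transport function (unit mass from each vertex to the trap $\tau(u)$ at the end of its forward path, i.e.\ $f(u,v)=\ind{v=\tau(u)\ne\infty}$), the same mass-transport identity giving $\E[\abs{C(o)}\ind{o\in\IV}]\le 1$, and the same deduction of the no-semi-infinite-path statement from a.s.\ finiteness of $\abs{C(o)}$ on $\{o\in\IV\}$. The only (harmless) difference is that you insist the argument must come after Proposition~\ref{prop:bounded}; in fact the indicator $\ind{\tau(u)\ne\infty}$ already makes $\sum_v f(o,v)\le 1$ hold deterministically, so the paper's proof, like yours in substance, needs no input from that proposition.
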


\begin{proof}
Let $n\in\N$. Let us apply the mass transportation principle to
\[f(u,v)=\ind{v = \tau(u)\ne\infty},\]
which sends unit mass, from every vertex leading to a trap, toward this trap. This gives
\[\P(o\not\to\infty)=\E[\ind{o\in\IV}\#\{u\st u\to o\}]=\E[\ind{o\in\IV}\abs{C(o)}],\]
In particular, we deduce that $\abs{C(o)}<\infty$ a.s.\ if $o\in\IV$, which shows that almost surely no infinite path leads to a trap vertex.
\end{proof}

Now Theorem~\ref{thm:omega} is a consequence of the previous two propositions: almost surely the path starting from $o$ ends in a trap, and almost surely the component of a trap is of integrable size, hence almost surely the connected component of $o$ is finite.

\section{Integrability of components}


We are interested in an estimation of the size of the component of the origin. To that aim, we shall derive a quantitative version of Proposition~\ref{prop:bounded}, which will show that the tail of the distribution of the size of $C(o)$ (either diameter or number of vertices) decays sub-exponentially, as a stretched exponential. As a significant corollary, this will entail integrability of the size of $C(o)$.

Note indeed that, although we proved (cf.\ Proposition~\ref{prop:from_infinity}) $\E[\abs{C(o)}\s o\in\IV]<\infty$, this doesn't imply $\E[\abs{C(o)}]<\infty$ due to a size bias caused by the conditioning. More precisely, we actually need a second moment: we have
\[\E[\abs{C(o)}]=\E[\abs{C(o)}^2,\,o\in\IV],\]
as an instance of a general size biasing principle:

\begin{lemma}\label{lem:size_bias}
For any measurable function $\Phi:\Omega\to\R_+$ such that almost surely $\Phi\circ\tau_u=\Phi$ for all $u\in C(o)$, where $\tau_u:\Omega\to\Omega$ denotes the translation by $u\in\Z^2$,
\[\E[\Phi]=\E[\Phi\cdot\abs{C(o)},\,o\in\IV].\]
\end{lemma}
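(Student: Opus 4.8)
The identity to prove is a mass-transport computation, structurally identical to the proof of Proposition~\ref{prop:from_infinity}, but now transporting the value $\Phi$ rather than a unit mass. The plan is to apply the mass transport principle to the function
\[
f(u,v) = \Phi \cdot \ind{v = \tau(u) \ne \infty},
\]
where I first need to check that $f$ satisfies the diagonal-invariance hypothesis $\E[f(u,v)] = \E[f(u+w,v+w)]$. This invariance should follow because $\tau$ is an equivariant function of the environment (i.e. $\tau(u+w)$ in the environment $\omega$ equals $w + \tau(u)$ in the translated environment $\tau_{-w}\omega$, by construction of $\omega$ from the shift-invariant law of $(U,V)$), and because $\Phi$ is assumed to satisfy $\Phi\circ\tau_u = \Phi$ on the relevant event — this last point is exactly where the hypothesis of the lemma is used. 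I would spell out that on $\{v = \tau(u) \ne \infty\}$ we have $u \in C(v)$, so $v \in C(u) $ and the translation-invariance hypothesis on $\Phi$ applies, making $\Phi$ effectively a function only of the ``shape'' of the component, which is what forces $\E[f(u,v)] = \E[f(u+w,v+w)]$.

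Granting the invariance, the mass transport principle gives
\[
\E\Big[\sum_{u\in\Z^2} f(o,u)\Big] = \E\Big[\sum_{u\in\Z^2} f(u,o)\Big].
\]
The left-hand side is $\E\big[\sum_u \Phi\,\ind{u = \tau(o)\ne\infty}\big] = \E[\Phi\,\ind{\tau(o)\ne\infty}] = \E[\Phi\,\ind{o\not\to\infty}]$, and since by Proposition~\ref{prop:from_infinity} (combined with Proposition~\ref{prop:bounded}) almost surely every path ends in a trap, i.e.\ $o\not\to\infty$ a.s., this equals $\E[\Phi]$. The right-hand side is $\E\big[\sum_u \Phi\,\ind{o = \tau(u)\ne\infty}\big] = \E\big[\Phi\cdot\#\{u \st u \to o\}\,\ind{o\in\IV}\big]$; and on $\{o\in\IV\}$ the set $\{u \st u\to o\}$ is exactly $C(o)$ (every vertex in the component flows to the unique trap vertex $o$, and no vertex escapes to infinity), so the right-hand side is $\E[\Phi\cdot\abs{C(o)},\,o\in\IV]$. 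Equating the two sides yields the claim.

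\textbf{Main obstacle.} The only delicate point is verifying the diagonal-invariance hypothesis of the mass transport principle for $f$, and in particular being careful that $\Phi$ — which is a priori an arbitrary nonnegative random variable, not manifestly a function of the component — genuinely cancels under the shift. The argument is that the hypothesis ``$\Phi\circ\tau_u = \Phi$ for all $u\in C(o)$'' means $\Phi$ is constant along every orbit of the translations generated by displacements within a single component, which is precisely enough to conclude that $\E[\Phi\,\ind{v=\tau(u)\ne\infty}]$ depends on $(u,v)$ only through the law of the environment seen from $u$ restricted to the event that $v$ is the endpoint — and this law is translation invariant in $(u,v)$ jointly. One should state this carefully, perhaps by writing $f(u,v) = (\Phi\circ\tau_{u})\big(\tau_{-u}\cdot\big)\,\ind{\cdots}$ and using equivariance of $\omega$ and $\tau$ under the shift together with shift-invariance of $\P$; the computation is routine once set up but is where all the hypotheses are consumed.
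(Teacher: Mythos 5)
There is a genuine gap, and it sits exactly at the point you yourself flag as the crux: the verification of the diagonal-invariance hypothesis of the mass transport principle for your choice $f(u,v)=\Phi\cdot\ind{v=\tau(u)\ne\infty}$. The hypothesis of the lemma only states that, almost surely, $\Phi\circ\tau_u=\Phi$ for $u\in C(o)$ with $o$ the \emph{fixed} origin; it does not make $\Phi$ ``a function of the shape of the component'', and it says nothing about $\Phi\circ\tau_w$ for a generic shift $w$. Concretely, by stationarity and equivariance of $u\mapsto\tau(u)$ one gets $\E[f(u+w,v+w)]=\E[(\Phi\circ\tau_{w'})\,\ind{v=\tau(u)\ne\infty}]$ where $\tau_{w'}$ is the translation by $\pm w$, and nothing identifies $\Phi\circ\tau_{w'}$ with $\Phi$ on the event $\{v=\tau(u)\ne\infty\}$: that event does not involve the origin at all, and $w'$ need not lie in $C(o)$. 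Your justification ``on $\{v=\tau(u)\ne\infty\}$ we have $u\in C(v)$, so $v\in C(u)$ and the hypothesis applies'' misreads the hypothesis, which concerns $C(o)$, not $C(u)$ or $C(v)$; for general $u,v,w$ the identity $\E[f(u,v)]=\E[f(u+w,v+w)]$ is simply false for your $f$ (the observable $\Phi$ is anchored at the origin while the indicator is translated), so the mass transport lemma cannot be invoked as stated.

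The paper's proof removes the problem by transporting the \emph{recentred} observable: it takes $f(u,v)=\Phi\circ\tau_u\cdot\ind{u\to v,\ v\in\IV}$. With this choice $f(u+w,v+w,\omega)=f(u,v,\tau_w\omega)$, so the invariance hypothesis holds automatically by stationarity of $\P$, with no condition on $\Phi$; the assumption $\Phi\circ\tau_u=\Phi$ on $C(o)$ is used only afterwards, to replace $\Phi\circ\tau_u$ by $\Phi$ inside $\sum_u\E[\Phi\circ\tau_u\,\ind{u\to o,\ o\in\IV}]$, which is legitimate because $u\to o$ forces $u\in C(o)$. (Alternatively, your $f$ could be salvaged by proving directly the only instances that the MTP argument actually uses, namely $\E[f(o,u)]=\E[f(2o-u,o)]$; after recentring, the relevant shift is by a vertex of $C(o)$ on the event in question, so the hypothesis does apply there — but that amounts to redoing the proof of the mass transport lemma rather than citing it, and it is not what you wrote.) Your remaining bookkeeping — the sent mass equals $\E[\Phi]$ because the path from $o$ a.s.\ ends in a trap, and the received mass equals $\E[\Phi\,\abs{C(o)},\ o\in\IV]$ — is correct and matches the paper.
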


\begin{proof}
As an application of the mass transport principle to the function $f:(u,v)\mapsto \Phi\circ\tau_u\cdot\ind{u\to v,\,v\in\IV}$,
one gets
\[\sum_u \E[\Phi\circ\tau_u\ind{u\to o,\,o\in\IV}] = \sum_u \E[\Phi\ind{o\to u,\,u\in\IV}].\]
However, due to the assumption on $\Phi$, the left hand side rewrites as
\[\sum_u \E[\Phi\ind{u\to o,\,o\in\IV}]=\E\cro[\bigg]{\Phi\sum_u\ind{u\to o}\ind{o\in\IV}}=\E[\Phi\cdot\abs{C(o)}\ind{o\in\IV}],\]
and it follows from Theorem~\ref{thm:Z} that a.s.\ there is a (clearly unique) $u\in\IV$ such that $o\to u$ hence the right hand side rewrites as
\begin{align*}
\E\cro[\bigg]{\Phi\sum_u\ind{o\to u, u\in\IV}}=\E[\Phi].
\end{align*}
The lemma follows.
\end{proof}

In addition to considering $C(o)$, we are also interested in the trajectory
\[\thepath(o)\defeq \{u\st o\to u\}=\{Z_n\st n\in\N\},\]
and more precisely to the maximum distance reached by the trajectory: 

\begin{theorem}\label{thm:diameter_tail}
There exists positive constants $c_1,C_1,c_2,C_2$ such that, for all $n$,
\[C_1 e^{-c_1 n^{1/3}}\le \P(\max_{i\ge0} \nor{Z_i}>n) \le C_2 e^{-c_2 n^{1/4}}.\]
\end{theorem}

Let us already deduce tail asymptotics for the diameter of $C(o)$:

\begin{corollary}
There exists positive constants $c_1,C_1,c_2,C_2$ such that, for all $n$,
\[C_1 e^{-c_1 n^{1/3}}\le \P(\thediam C(o)>n) \le C_2 e^{-c_2 n^{1/4}}\]
and
\[C_1 e^{-c_1 n^{1/3}}\le \P(\abs{C(o)}>n) \le C_2 e^{-c_2 n^{1/8}}.\]
In particular, $\E[\thediam C(o)]<\infty$ and $\E[\abs{C(o)}]<\infty$.
\end{corollary}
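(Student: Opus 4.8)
The plan is to derive the corollary directly from Theorem~\ref{thm:diameter_tail} together with the size-biasing identities recorded just before the theorem. For the diameter of $C(o)$, the point is that $C(o)$ is a union of paths all ending at the common trap vertex $\tau(o)$ (when $o\not\to\infty$, which holds a.s.\ by Theorem~\ref{thm:omega}), so $C(o)$ is contained in the union of the trajectories $\thepath(u)$ over $u\in C(o)$, and the diameter of $C(o)$ is controlled by the maximal reach of these trajectories. First I would make this precise: condition on $o\in\IV$ and use Lemma~\ref{lem:size_bias} (size-biasing) to transfer statements about $C(o)$ seen from a uniform vertex to statements about $C(o)$ seen from its root trap. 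Concretely, for the lower bound one uses that $\thepath(o)\subseteq C(o)$, so $\thediam C(o)\ge \max_{i}\nor{Z_i - Z_0}$ up to a factor $2$, and the lower bound of Theorem~\ref{thm:diameter_tail} transfers with the same stretched-exponential rate $n^{1/3}$ (the constants may change). For the upper bound, if $\thediam C(o)>n$ then some vertex $u\in C(o)$ has a trajectory reaching distance $\ge n/2$ from $\tau(o)$; applying the mass transport principle / size bias to the event ``$\exists u\in C(o)$ with $\nor{Z^{(u)}_i}$ large'' and using a union bound over the (at most $|C(o)|$) starting vertices, one reduces to the single-trajectory estimate of Theorem~\ref{thm:diameter_tail}. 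Since $|C(o)|$ itself has at worst a stretched-exponential tail (which one gets from the diameter tail, as $C(o)$ lives in a box of side $\thediam C(o)$, so $|C(o)|\le(\thediam C(o)+1)^2$), the union bound costs only a polynomial-in-$n$ factor, which is absorbed into the exponent at the price of worsening $n^{1/4}$ to some $n^{1/4}$-type rate — this is the source of the $n^{1/4}$ in the first displayed inequality and, after squaring, the $n^{1/8}$ in the second.

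Next I would handle $|C(o)|$. The lower bound is immediate from $\thediam C(o)\le |C(o)|$ (the component is connected, so its diameter is at most its cardinality minus one), giving $\P(|C(o)|>n)\ge\P(\thediam C(o)>n)\ge C_1 e^{-c_1 n^{1/3}}$. The upper bound follows from $|C(o)|\le (2\thediam C(o)+1)^2$, so $\{|C(o)|>n\}\subseteq\{\thediam C(o)> c\sqrt n\}$ for a suitable constant, and plugging $\sqrt n$ into the diameter upper bound turns $n^{1/4}$ into $n^{1/8}$. Thus both displays in the corollary reduce, by these elementary geometric comparisons and one size-biasing step, to Theorem~\ref{thm:diameter_tail}.

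Finally, the integrability claims $\E[\thediam C(o)]<\infty$ and $\E[|C(o)|]<\infty$ follow immediately: a nonnegative integer-valued random variable $N$ with $\P(N>n)\le C e^{-c n^{\alpha}}$ for some $\alpha>0$ has $\E[N]=\sum_{n\ge0}\P(N>n)<\infty$ since $\sum_n e^{-cn^{\alpha}}$ converges. One could alternatively cite the identity $\E[|C(o)|]=\E[|C(o)|^2,\,o\in\IV]$ noted before Lemma~\ref{lem:size_bias}, but it is not needed for the corollary as stated; the tail bound already does the job. I expect the only genuinely delicate point to be the union-bound step in the diameter upper bound: one must argue that the conditioned expectation $\E[\,\cdot\,|\,o\in\IV]$ of the number of starting vertices whose trajectory reaches far is controlled, which is exactly where Lemma~\ref{lem:size_bias} with $\Phi$ an indicator of a ``far reach'' event (suitably made translation-covariant along the component) is invoked; care is needed to check the hypothesis $\Phi\circ\tau_u=\Phi$ on $C(o)$, or else to run the mass-transport argument directly as in the proof of Proposition~\ref{prop:from_infinity}. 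Everything else is bookkeeping with the two geometric inequalities relating diameter and cardinality.
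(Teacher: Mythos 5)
Your overall strategy (reduce everything to Theorem~\ref{thm:diameter_tail} via the comparisons $\thediam C(o)\le\abs{C(o)}\le(\thediam C(o)+O(1))^2$, plus a size-biasing step to root the component at its trap) is the same as the paper's, and your treatment of the lower bounds, of the passage from the diameter to $\abs{C(o)}$ (whence the $n^{1/8}$), and of the integrability claims is correct. But the key step of the diameter upper bound has a genuine gap: you control the cost of the union bound over starting vertices by $\abs{C(o)}\le(\thediam C(o)+1)^2$ together with ``the diameter tail'' --- yet the only diameter tail available at that point is the one for a \emph{single trajectory} $\thepath(o)=\{Z_n\}$ from Theorem~\ref{thm:diameter_tail}; the tail of $\thediam C(o)$ is precisely what you are trying to prove, so the argument as written is circular. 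Your fallback (Lemma~\ref{lem:size_bias} with $\Phi$ the indicator of a ``far reach'' event) does not close the gap either: an event about the reach of the trajectory from $o$ is not invariant under $\tau_u$ for $u\in C(o)$, and you acknowledge but do not resolve this. Relatedly, your remark that the polynomial union-bound factor is ``the source of the $n^{1/4}$'' is off: the $1/4$ comes directly from Theorem~\ref{thm:diameter_tail}, polynomial prefactors only affect constants.

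The paper closes exactly this point without any a priori control of $\abs{C(o)}$: after the size-biasing identity $\P(\thediam C(o)>n)=\E[\abs{C(o)}\ind{\thediam C(o)>n},\,o\in\IV]$, it expands the weight as $\abs{C(o)}=\sum_v\ind{v\to o}$ and bounds $\ind{\thediam C(o)>n,\,o\in\IV}\le\sum_{\nor u>n/2}\ind{u\to o}$, so that everything reduces to the pair probabilities $\P(u\to o,\,v\to o)$; Cauchy--Schwarz decouples these into $\P(u\to o)^{1/2}\P(v\to o)^{1/2}$, and each factor is controlled by Theorem~\ref{thm:diameter_tail} via translation invariance ($\P(u\to o)\le\P(\max_i\nor{Z_i}\ge\nor u)\le C_2e^{-c_2\nor u^{1/4}}$), after which both sums converge with the stated stretched-exponential rate. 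If you want to avoid size-biasing altogether, an even more direct route works: $\{\thediam C(o)>n\}\subset\{\exists u\in C(o),\ \nor u>n/2\}$, and $\P(u\in C(o))\le\P(u\to o)+\P(o\to u)$ is bounded by the same single-trajectory tail by translation invariance, so a plain union bound over $\nor u>n/2$ suffices. Either of these repairs your argument; as submitted, the circular control of the union bound is the missing idea.
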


\begin{proof}[Proof of the corollary]
Since $\thediam C(o)\le\abs{C(o)}\le(\thediam C(o))^2$, it suffices to prove the bounds for the diameter. The lower bound follows immediately from the lower bound of Theorem~\ref{thm:diameter_tail}.

For the upper bound, in order to reduce from $C(o)$ to $\thepath(o)$ we start by using the size-biasing identity (Lemma~\ref{lem:size_bias}) so as to make $o$ a trap:
\begin{align*}
\P(\thediam C(o)>n)
	& = \E[\abs{C(o)}\ind{\thediam C(o)>n},\,o\in\IV].
\end{align*}
Then, if $\thediam C(o)>n$ and $o\in\IV$, there exist $v,w$ such that $v\to o$, $w\to o$ and $\nor{v-w}>n$, hence there exists $u$ (either $v$ or $w$) such that $\nor u>n/2$ and $u\to 0$:
\begin{align*}
\P(\thediam C(o)>n)
	& \le \sum_{\nor u>\frac n2}\E[\abs{C(o)}\ind{u\to o},\,o\in\IV] = \sum_{\nor u>\frac n2}\sum_v\P(u\to o,\, v\to o,\, o\in\IV)\\
	& \le \sum_{\nor u>\frac n2}\sum_v \P(u\to o)^{1/2}\P(v\to o)^{1/2}
\end{align*}
by Cauchy-Schwarz inequality. The upper bound of Theorem~\ref{thm:diameter_tail} applied to all vertices $u,v$ gives
\begin{align*}
\P(\thediam C(o)>n)
	& \le \sum_{\nor u>\frac n2}\P(u\to o)^{1/2} \sum_v \P(v\to o)^{1/2}\\
	& \le \sum_{\nor u>\frac n2}C_2^{1/2}e^{-\frac12 c_2\nor u^{1/4}} \sum_v C_2^{1/2}e^{-\frac12c_2\nor v^{1/4}}.
\end{align*}
By comparing $\nor u_\infty\le \nor u\le\sqrt d\nor u_\infty$, we have
\[\sum_{\nor u>\frac n2}e^{-\frac12 c_2\nor u^{1/4}} \le \sum_{\nor u_\infty>\frac n{2\sqrt d}}e^{-\frac12 c_2\nor u_\infty^{1/4}}= \sum_{k>\frac n{2\sqrt d}}\pa[\big]{(2k+1)^d-(2k-1)^d} e^{-\frac12 c_2 k^{1/4}},\]
and the last sum is bounded, for large $n$, by $\ds C(d)\int_{\frac n{2\sqrt d}-1}^\infty r^{d-1}e^{-\frac12c_2r^{1/4}}dr$, which is asymptotically equivalent to $C'(d)n^{d-\frac14}e^{-\frac12c_2n^{1/4}}\le C'(d)e^{-c'_2n^{1/4}}$. Finally, the above sum in $v$ in particular converges.
\end{proof}

\begin{proof}[Proof of Theorem~\ref{thm:diameter_tail}]
Let us observe that, when the walker enters a stripe of adjacent right-pushing horizontal lines, then its vertical motion is a symmetric simple random walk until it exits this stripe (and possibly conditioned by some information on the vertical lines due to the past of the walk).

\paragraph{\bf(Lower bound)}
In particular, the maximum distance of the path from $o$ is at least $n$ in the case where it keeps going (North-\ or South-)East during at least $n$ steps, which happens if, for some $\alpha>0$, the $n^\alpha$ lines above and below the origin are pointing eastward, and the exit time of an independent symmetric simple random walk out of $(-n^\alpha,n^\alpha)$ exceeds $n$:
\[\P(\max_{i\ge0}\nor{Z_i}>n)  \ge \P(\forall y\in(-n^\alpha,n^\alpha)\cap\Z,\ U_y=+1)\P(\tau_{n^\alpha}> n),\]
where, for any $L>0$, $\tau_L$ denotes the exit time out of $(-L,L)$ for a symmetric simple random walk. By Lemma~\ref{lem:SRW} in the Appendix, we get (for large $n$)
\[\P(\max_{i\ge0}\nor{Z_i}>n) \ge 2^{-(2n^\alpha-1)}e^{-c n^{1-2\alpha}},\]
for some constant $c$, which gives the lower bound of the theorem by taking the optimal value $\alpha=1/3$.

\paragraph{\bf(Upper bound)}
Up to using a union bound on all four directions, it is sufficient to consider the eastward extent, i.e.\ to show $\P(\max_{i\ge0} X_i>n)\le C e^{-c n^{1/4}}$ for some $c,C>0$ and all $n\ge0$. The following computation makes the reasoning of Proposition~\ref{prop:bounded} more quantitative, which necessitates an extra control in the transverse direction. Let us already refer the reader to Figure~\ref{fig:renewal_quantitative} depicting the forthcoming definitions.

\begin{figure}[h!]
	\includegraphics[width=15cm,page=1]{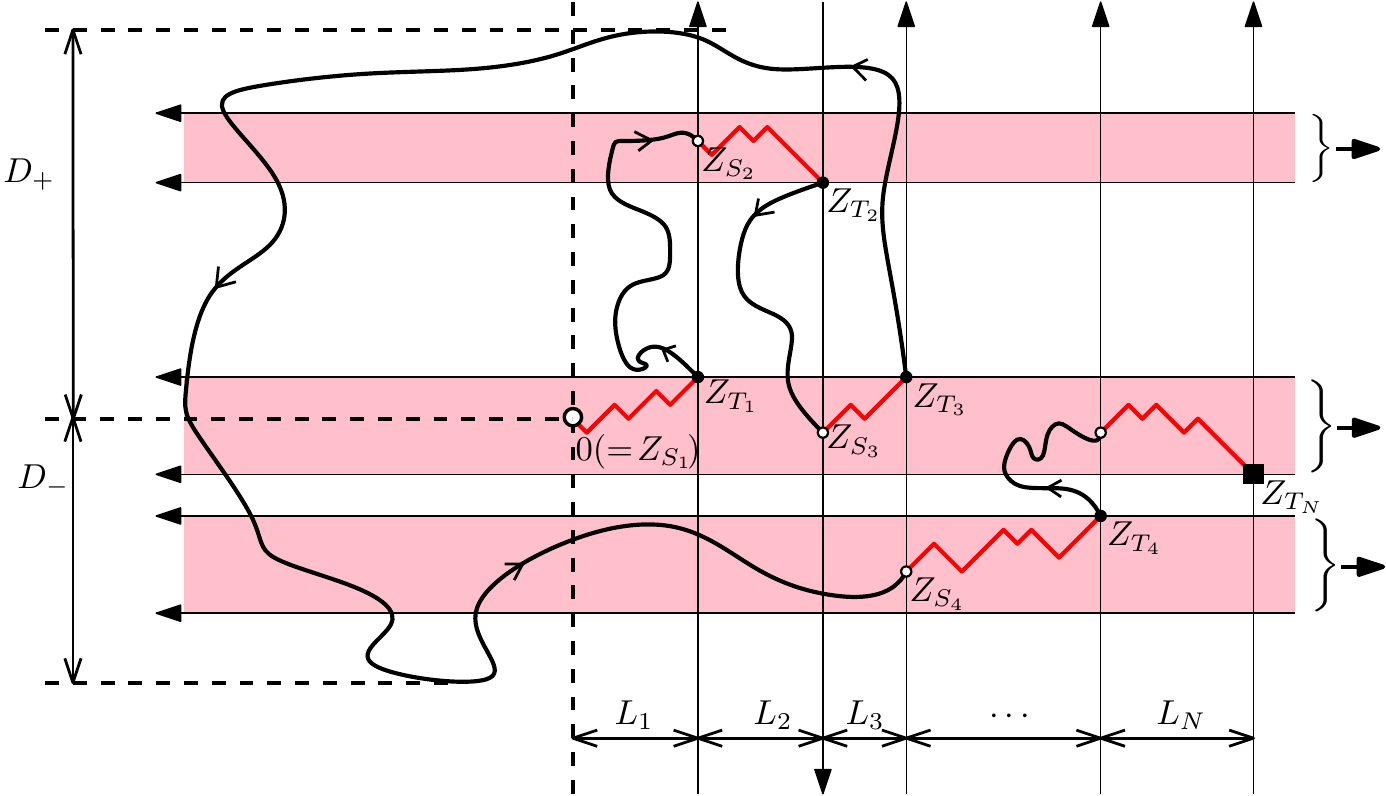}\\
\caption{Sketch for the proof of Theorem~\ref{thm:diameter_tail}. The pink stripes represent the stripes of east-going lines that are involved in the eastward discovery pieces $(Z_{S_r},\ldots,Z_{T_r})$. Each piece of path within these stripes is a one-dimensional space-time symmetric simple random walk. Note that the black pieces of path are only sketched and do not follow every constraint they should. In the case pictured here, $N=5$ and the path ends with a trap at $Z_{T_5}$; alternatively, $N=5$ could also happen with a trap encountered after $T_5$, and before $S_6$ (in that case, the rightmost vertical line would be pointing down). }\label{fig:renewal_quantitative}
\end{figure}

Let us denote by $[S_1,T_1]$, $[S_2,T_2]$,\ldots, $[S_n,T_n]$ the (maximal) intervals of times when the walk discovers new lines eastward:
\[S_1=0,\qquad T_1=\inf\{n\ge S_1\st X_{n+1}=X_n-1\},\]
and then, for all $r\ge1$,
\[S_{r+1}=\inf\{n\ge T_r\st X_{n+1}=\max_{k\le n}X_k +1\},\qquad T_{r+1}=\inf\{n\ge S_{r+1}\st X_{n+1}=X_n-1\},\]
with $S_r=\infty$ as soon as the above defining set is empty, and $T_r=\infty$ as a consequence. For $r=1,\ldots,N$, let $L_r=T_r-S_r$, and notice that, by definition, $L_r=X_{T_r}-X_{S_r}$ too.

Denote
\[N=\max\{r\ge0\st S_{r+1}<\infty\}\in\{0,1,2,\ldots\}.\]
From the reasoning of the proof of Proposition~\ref{prop:bounded}, we know that $N$ is stochastically bounded by a geometric distribution of parameter $1/2$ (corresponding the first $r$ such that the vertical line at $X_{T_r}$ is oriented in the direction that produces a trap). Denote also
\[D_-=-\min_{n\ge0} Y_n,\qquad D_+=\max_{n\ge0}Y_n,\]
and $D=\max(D_-,D_+)$.
Let $\alpha>0$, and write $f(n)=n^2$ (this choice is arbitrary within certain bounds to be specified below).
We have, for all $n$,
\begin{align*}
\P\big(\max_{i\ge0}X_i>n,\ D\le f(n)\big)
	& \le \P\Big(\sum_{i=1}^N L_i>n,\ D\le f(n)\Big)\\
	& \le \P(N>n^\alpha)+\P\bigg(N\le n^\alpha,\ \sum_{i=1}^N L_r>n,\ D\le f(n)\bigg)\\
	& \le \P(N>n^\alpha)+\sum_{r=1}^{n^\alpha}\P\Big(L_r>n^{1-\alpha},\ r\le N,\ D_r\le f(n)\Big)
\end{align*}
where $D_r=\max_{n\le S_r}|Y_n|$.

Since $N$ is stochastically dominated by a geometric distribution of parameter $1/2$,
\[\P(N>n^\alpha)\le 2^{-n^\alpha}.\]
On the other hand, given all horizontal orientations $(U_y)_{y\in\Z}$, and the past trajectory $(Z_0,\ldots,Z_{S_r})$ before time $S_r$, the sequence $(Y_{S_r},Y_{S_r+1},\ldots)$ coincides with a simple symmetric random walk starting at $Y_{S_r}$ and running until it hits a value $y\in\Z$ with $U_y=-1$. Denote by $M_n$ the maximum number of consecutive $+1$ in $(U_y)_{y\in\Z}$, starting from an index $y\in[-f(n),f(n)]$:
\[M_n=\max\{k\in\N\st \exists i\in\{-f(n)-k,\ldots,f(n)\},\ U_i=\cdots=U_{i+k-1}=+1\}.\]
Then, given $(U_y)_{y\in\Z}$ and $(Z_0,\ldots,Z_{S_r})$, on the event $\{N\le r,\ D_r\le f(n),\ M_n\le n^\gamma\}$, where $\gamma>0$, we have that $L_r$ is stochastically smaller than the exit time of a symmetric simple random walk out of $[-n^\gamma/2,n^\gamma/2]$, starting from 0 (indeed this is the worst case):
\[\P(L_r>n^{1-\alpha}\s Z_0,\ldots,Z_{S_r},\,(U_y)_{y\in\Z})\le \P(\tau_{\frac{n^\gamma}2}>n^{1-\alpha})\qquad\text{on }\{N\le r,\ D_r\le f(n),\ M_n\le n^\gamma\}.\]
As a consequence, for any $\gamma>0$,
\begin{align*}
& \P\Big(L_r>n^{1-\alpha},\ r\le N,\ D_r\le f(n)\Big)
	\\
& \le \P(M_n>n^\gamma)+\E\Big[\P\Big(L_r>n^{1-\alpha}|Z_0,\ldots,Z_{S_r},(U_y)_{y\in\Z}\Big),r\le N, D_r\le f(n),M_n\le n^\gamma\Big]\\
& \le \P(M_n>n^\gamma)+\P(\tau_{\frac{n^\gamma}2}>n^{1-\alpha}).
\end{align*}
We have, by a simple union bound,
\[\P(M_n>n^\gamma)\le(f(n)+n^\gamma)2^{-n^\gamma}\]
hence with Lemma~\ref{lem:SRW} we get (for large $n$)
\[\P\big(\max_{i\ge0}X_i>n,\ D\le f(n)\big)
\le 2^{-n^\alpha}+n^\alpha\big((f(n)+n^\gamma)2^{-n^\gamma}+\frac{n^\gamma}2 e^{-c_2 n^{1-\alpha-2\gamma}}\big).\]
Since $\log f(n)=o(n^\gamma)$, we get, for some positive constants $c,C$,
\[\P\big(\max_{i\ge0}X_i>n,\ D\le f(n)\big)
\le C e^{-c\, n^{\min(\alpha,\gamma,1-\alpha-2\gamma)}},\]
which for the optimal choice $\alpha=\gamma=\frac14$ gives
\[\P\big(\max_{i\ge0}X_i>n,\ D\le f(n)\big)\le C e^{-c\, n^{1/4}}.\]
Up to changing the constants, we may assume that the statement of the theorem is with respect to $\nor\cdot_\infty$, in which case it follows by a union bound on directions that (up to changing the above $C$),
\[\P(n< \max_i\nor{Z_i}\le f(n))\le 4\P(\max_i X_i>n,\ D\le f(n))\le C e^{-c\,n^{1/4}}.\]
In order to conclude, we decompose
\[\P(\max_i\nor{Z_i}>n)=\sum_{k=0}^\infty\P(u_k<\max_i\nor{Z_i}\le u_{k+1}) \le\sum_{k=0}^\infty C e^{-c u_k^{1/4}},\]
where $u_0=n$ and, for all $k\ge0$, $u_{k+1}=f(u_k)$. Remember we chose $f:n\mapsto n^2$, so that $u_k=n^{2^k}$ for all $k\ge0$. In particular, for $n\ge2^4$, for all $k$,
\[(u_k)^{1/4}=n^{1/4}+\big((n^{1/4})^{2^k}-n^{1/4}\big)\ge n^{1/4}+(2^{2^k}-2)\]
(indeed, $x\mapsto x^{2^k}-x$ is increasing on $[1,\infty)$) so that
\[\P(\max_i\nor{Z_i}>n)\le Ce^{-cn^{1/4}}\sum_{k=0}^\infty e^{-c(2^{2^k}-2)}=C' e^{-cn^{1/4}},\]
which concludes the proof.
\end{proof}

\section*{Appendix}

The following lemma is standard. For the sake of completeness, and for lack of a simple reference to this specific result, we sketch a short proof.

\begin{lemma}\label{lem:SRW}
Let $\tau_L$ denote the exit time out of $(-L,L)$ for a simple symmetric random walk $(S_n)_{n\ge0}$ on $\Z$ started at $0$. For all $n\in\N$ and $L\ge2$, one has
\[e^{-n\phi(L)}\le \P(\tau_L>n)\le Le^{-n\phi(L)},\]
where $\phi(L)=-\log\cos\frac\pi{2L}\sim \frac{\pi^2}{8L^2}$ as $L\to\infty$. In particular, for any $c_2<\frac{\pi^2}8<c_1$, there is $L_0$ such that, for all $n\in\N$, for all $L\ge L_0$,
\[e^{-c_1\frac n{L^2}}\le \P(\tau_L>n)\le L e^{-c_2\frac n{L^2}}.\]
\end{lemma}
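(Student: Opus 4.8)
Lemma~\ref{lem:SRW}: for a simple symmetric random walk $(S_n)$ on $\Z$ started at $0$, with $\tau_L$ the exit time out of $(-L,L)$, one has $e^{-n\phi(L)}\le \P(\tau_L>n)\le Le^{-n\phi(L)}$ with $\phi(L)=-\log\cos\frac{\pi}{2L}$.

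\textbf{Overall approach.} The plan is the classical spectral/martingale route: exploit the fact that $\{\tau_L>n\}$ is the event that the walk stays inside the interior $\{-(L-1),\dots,L-1\}$ up to time $n$, and that the sub‑Markov transition operator on this interior is (a scalar multiple of) the path‑graph adjacency matrix, whose top eigenvalue is $\cos\frac{\pi}{2L}=e^{-\phi(L)}$ with a strictly positive eigenfunction. Concretely I would set $h(x)=\cos\frac{\pi x}{2L}$ and $\lambda=\cos\frac{\pi}{2L}$, and note the three elementary facts: (i) by the sum‑to‑product identity, $\tfrac12\big(h(x-1)+h(x+1)\big)=\lambda\,h(x)$ for every $x\in\Z$; (ii) $h(\pm L)=0$ and $h(0)=1$; (iii) on the interior one has $\sin\frac{\pi}{2L}\le h(x)\le 1$, the minimum being attained at $x=\pm(L-1)$ (where $h=\sin\frac{\pi}{2L}$) and the maximum at $x=0$. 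Here I take $L\ge 2$ an integer, so that $S_{\tau_L}=\pm L$ exactly.

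\textbf{The key identity.} Next I would check that $M_n:=\lambda^{-(n\wedge\tau_L)}\,h(S_{n\wedge\tau_L})$ is a martingale: on $\{\tau_L>n\}$ the point $S_n$ is interior, so (i) gives $\E[M_{n+1}\mid\mathcal F_n]=\lambda^{-(n+1)}\lambda\,h(S_n)=M_n$; on $\{\tau_L\le n\}$, $h(S_{\tau_L})=h(\pm L)=0$, so $M_n\equiv 0$ there. Hence $\E[M_n]=\E[M_0]=h(0)=1$, and since $M_n=\lambda^{-n}h(S_n)\mathbf 1_{\{\tau_L>n\}}$ this reads
\[
\E\big[h(S_n)\mathbf 1_{\{\tau_L>n\}}\big]=\lambda^{\,n}=e^{-n\phi(L)}.
\]

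\textbf{Extracting both bounds.} Using fact (iii) on the event $\{\tau_L>n\}$, the identity is sandwiched as $\sin\frac{\pi}{2L}\,\P(\tau_L>n)\le e^{-n\phi(L)}\le\P(\tau_L>n)$. The right inequality is already the stated lower bound. The left one gives $\P(\tau_L>n)\le\frac{1}{\sin(\pi/2L)}e^{-n\phi(L)}$, and since $\sin x\ge\frac{2x}{\pi}$ on $[0,\pi/2]$ (concavity of $\sin$) we get $\frac{1}{\sin(\pi/2L)}\le L$, which is the stated upper bound $\P(\tau_L>n)\le Le^{-n\phi(L)}$. For the asymptotics, $\cos u=1-\frac{u^2}{2}+O(u^4)$ gives $\phi(L)=-\log\cos\frac{\pi}{2L}=\frac{\pi^2}{8L^2}+O(L^{-4})\sim\frac{\pi^2}{8L^2}$; thus $L^2\phi(L)\to\frac{\pi^2}{8}$, so for any $c_2<\frac{\pi^2}{8}<c_1$ there is $L_0$ with $\frac{c_2}{L^2}\le\phi(L)\le\frac{c_1}{L^2}$ for $L\ge L_0$, and substituting into the two‑sided bound yields $e^{-c_1 n/L^2}\le\P(\tau_L>n)\le Le^{-c_2 n/L^2}$.

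\textbf{Main obstacle.} There is essentially no technical difficulty once the harmonic function $h$ (the principal Dirichlet eigenfunction on the interval) is written down; the whole argument is then a two‑line martingale computation plus the trivial estimate $\sin x\ge 2x/\pi$. The only point requiring a word of care is that in the applications $L$ is not an integer (e.g.\ $L=n^\alpha$ or $n^\gamma/2$); there one simply notes $\tau_L=\tau_{\lceil L\rceil}$ for an integer‑valued walk, applies the above with $\lceil L\rceil$, and absorbs the harmless replacement of $L$ by $\lceil L\rceil\le 2L$ into the constants (using monotonicity of $\phi$), which does not affect the exponents used in the body of the paper.
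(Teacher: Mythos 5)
Your proposal is correct and follows essentially the same route as the paper: the martingale $\lambda^{-(n\wedge\tau_L)}\cos\bigl(\tfrac{\pi}{2L}S_{n\wedge\tau_L}\bigr)$ (the paper presents it as the real part of the exponential martingale $e^{i\theta S_n}/(\cos\theta)^n$ with $\theta=\tfrac{\pi}{2L}$), optional stopping at $n\wedge\tau_L$, and the sandwich $\tfrac1L\le\sin\tfrac{\pi}{2L}\le\cos\bigl(\tfrac{\pi}{2L}x\bigr)\le1$ on the interior. Your explicit verification of the eigenvalue identity and of the vanishing of the martingale on $\{\tau_L\le n\}$ is just a slightly more detailed write-up of the same computation.
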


\begin{proof}[Proof of Lemma~\ref{lem:SRW}]
Let us denote by $\P_k$ the law of the simple symmetric random walk $(S_n)_n$ started at $k\in\Z$.

Consider the martingale $(M_n)_{n\ge0}$ defined by $M_n=\frac{\cos(\theta S_n)}{(\cos\theta)^n}$ where $\theta=\frac\pi {2L}<\frac\pi2$ (note that $M_n$ is the real part of the classical exponential martingale $e^{i\theta S_n}/(\cos\theta)^n$) and the bounded stopping time $n\wedge\tau_L$. One has
\[\ind{n<\tau_L}\frac1L\le\ind{n<\tau_L}\sin\frac\pi{2L}=\ind{n<\tau_L}\cos\frac{\pi(L-1)}{2L}\le \cos(\frac{\pi}{2L} S_{n\wedge\tau_L})\le\ind{n<\tau_L},\]
and, by the stopping theorem,
\[1=M_0=\E_0[M_{n\wedge\tau_L}]=\E_0\Big[\cos\pa[\Big]{\frac\pi{2L} S_{n\wedge\tau_L}}\cos\pa[\Big]{\frac\pi{2L}}^{-n\wedge\tau_L}\Big],\]
hence
\[\pa[\Big]{\cos\frac\pi{2L}}^{-n}\P_0(\tau_L> n)\frac1L\le 1\le \pa[\Big]{\cos\frac\pi{2L}}^{-n}\P_0(\tau_L> n)
\]
and thus
\[\pa[\Big]{\cos\frac\pi{2L}}^n\le \P_0(\tau_L>n)\le L \pa[\Big]{\cos\frac\pi{2L}}^{n}.\]
The result follows.

%
\end{proof}

\section*{acknowledgments}
This research was supported by the Australian Research Council Grant DP180100613 and by the french ANR project MALIN (ANR-16-CE93-000).

\end{document}